\documentclass[11pt]{amsart}

\usepackage{amsfonts}
\usepackage{amsmath,amssymb,amsthm}
\usepackage{enumerate,graphicx}
\usepackage[square,numbers]{natbib}
\usepackage{stmaryrd}
\usepackage{MnSymbol}
\usepackage[all]{xy}

\newtheorem{theorem}{Theorem}[section]

\newtheorem{claim}[theorem]{Claim}

\newtheorem{corollary}[theorem]{Corollary}
\newtheorem{lemma}[theorem]{Lemma}

\theoremstyle{definition}

\newtheorem{example}[theorem]{Example}

\newtheorem{problem}{Problem}
\newtheorem{remark}[theorem]{Remark}

\begin{document}

\title[Bernoulli Automorphism of Reversible LCA]{On the Bernoulli Automorphism of Reversible Linear Cellular Automata}

\keywords{Measure-preserving transformation, invertible cellular automata, strong mixing, Bernoulli automorphism}
\subjclass{Primary 37A05; Secondary 37B15, 28D20}

\author{Chih-Hung Chang}
\address{Department of Applied Mathematics, National University of Kaohsiung, Kaohsiung 81148, Taiwan, ROC.}
\email{chchang@nuk.edu.tw; hchang@nuk.edu.tw}

\author{Huilan Chang}

\thanks{This work is partially supported by the Ministry of Science and Technology, ROC (Contract No MOST 103-2115-M-390-002- and 103-2115-M-390-004-).}
\date{September 21, 2015}

\baselineskip=1.5\baselineskip

\begin{abstract}
This investigation studies the ergodic properties of reversible linear cellular automata over $\mathbb{Z}_m$ for $m \in \mathbb{N}$. We show that a reversible linear cellular automaton is either a Bernoulli automorphism or non-ergodic. This gives an affirmative answer to an open problem proposed in [Pivato, Ergodic theory of cellular automata, Encyclopedia of Complexity and Systems Science, 2009, pp.~2980-3015] for the case of reversible linear cellular automata.
\end{abstract}

\maketitle

\section{Introduction}\label{sec:intro}

Motivated by biological applications, John von Neumann introduced cellular automata (CAs) in the late $1940$s. The main goal was to design self-replicating artificial systems that are also computationally universal and are analogous to human brain. Namely, CA is designed as a computing device in which the memory and the processing units are not separated from each other, and is massively parallel and capable of repairing and building itself given the necessary raw material.

CA has been systematically studied by Hedlund from purely mathematical point of view \cite{Hed-MST1969}. For the past few decades, studying CA from the viewpoint of the ergodic theory has received remarkable attention \cite{ABC-JCA2013, BKM-PD1997, CDL-TCS2004, CFMM-TCS2000, HMM-DaCDS2003, Kle-PAMS1997, PY-ETDS2002, PY-ETDS2004}. Pivato has characterized the invariant measures of bipermutative right-sided, nearest neighbor cellular automata \cite{Pivato-DaCDS2005}. Moreover, Pivato and Yassawi introduced the concepts of harmonic mixing for measures and diffusion for a linear CA and developed broad sufficient conditions for convergence the limit measures \cite{PY-ETDS2002, PY-ETDS2004}. Sablik demonstrates the measure rigidity and directional dynamics for CA \cite{Sab-ETDS2007, Sab-TCS2008}. Host \emph{et al}. have studied the role of uniform Bernoulli measure in the dynamics of cellular automata of algebraic origin \cite{HMM-DaCDS2003}. Furthermore, the sufficient conditions whether a one-dimensional permutative CA is strong mixing, $k$-mixing, or Bernoulli automorphic were independently revealed by Kleveland and Shereshevsky \cite{Kle-PAMS1997, She-MM1992, She-IMN1997}. Recently, one-sided expansive invertible cellular automata and two-sided expansive permutation cellular automata have been demonstrated to be strong mixing (see \cite{BM-IJM1997, BM-JMSJ2000, JNY-AAM2013, JY-AAM2007, Nasu-TAMS2002}).

Almost all the results about  are for one-dimensional (mostly permutative) CA and for the uniform measure. It is natural to ask the following question:
\begin{problem}[See \cite{Pivato-2009}]
Can mixing and ergodicity be obtained for non-permutative CA and/or non-uniform measures? What about multidimensional CA?
\end{problem}
Theorem \ref{thm:ILCA-Bernoulli} and Corollary \ref{cor:ILCA-NotErgodic} indicate that an invertible linear CA is either Bernoulli automorphic or non-ergodic for the uniform Bernoulli measure. In \cite{CFMM-TCS2000}, Cattaneo \emph{et al.} address a necessary and sufficient condition for the ergodicity of linear CA. Corollary \ref{cor:ILCA-NotErgodic} reveals a concise condition for the ergodicity of invertible CA. The result remains true for those measures satisfying some conditions (see Remark \ref{rmk:Result-for-more-measure}). The methodology can be extended to the investigation of multidimensional invertible linear CA, and even possible to the non-permutative cases. Related works are under preparation.

The rest of this paper is organized as follows. Section \ref{sec:main-result} states the main results and some preliminaries. The proofs are postponed to Sections \ref{sec:Proof-mixing} and \ref{sec:Proof-Bernoulli} while the key ideas are revealed via some examples in Section \ref{sec:example}. Discussion and further works are addressed in Section \ref{sec:discuss}.

\section{Statement of Main Results}\label{sec:main-result}

Let ${\mathbb{Z}}_{m}=\{0, 1,\ldots, m-1\}$ be the ring of the integers modulo $m$, where $m \geq 2$, and let $\mathbb{Z}^{\mathbb{Z}}_{m}$ be the space of all doubly-infinite sequences $x=(x_n)_{n=-\infty}^{\infty}\in \mathbb{Z}^{\mathbb{Z}}_{m}$, equipped with the product of the Tychonoff topology. Then the shift $\sigma: \mathbb{Z}^{\mathbb{Z}}_{m} \rightarrow \mathbb{Z}^{\mathbb{Z}}_{m}$ defined by $(\sigma x)_{i} = x_{i+1}$ is a homeomorphism of the compact metric space $\mathbb{Z}^{\mathbb{Z}}_{m}$. A one-dimension cellular automaton is a continuous map
$T_{f}:\mathbb{Z}^{\mathbb{Z}}_{m}\rightarrow \mathbb{Z}^{\mathbb{Z}}_{m}$ defined by $(T_{f} x)_i = f(x_{i+l},\ldots, x_{i+r})$, where $l, r \in \mathbb{Z}$ and $f: \mathbb{Z}^{r-l+1}_{m}\rightarrow \mathbb{Z}_{m}$ is a given local rule or map. A local rule $f$ is said to be linear if it can be written as
$$
f(x_l, \ldots, x_r) = \Sigma_{i=l}^r \lambda_i x_i \pmod{m}, \quad l, r \in \mathbb{Z},
$$
where at least one among $\lambda_{l},\ldots,\lambda _{r}$ is nonzero in $\mathbb{Z}_m$ \cite{FLM-TCS1997,MOW-CMP1984}.

A local rule $f$ is said to be permutative in $x_j$ (or, at the index $j$) if for any given finite
sequence
$$
(\overline{x}_l,\ldots,\overline{x}_{j-1},\overline{x}_{j+1},\ldots,\overline{x}_{r}) \in \mathbb{Z}_{m}^{r-l}
$$
we have
$$
\{f(\overline{x}_l,\ldots,\overline{x}_{j-1},x_{j}, \overline{x}_{j+1},\ldots,\overline{x}_{r}):x_{j} \in \mathbb{Z}_{m}\}= \mathbb{Z}_{m}.
$$
The notion of permutative cellular automata was first introduced by Hedlund \cite{Hed-MST1969}. A linear local rule $f$ is permutative at the index $j$ if and only if $\mathrm{gcd}(\lambda_j,m)=1$, where $\mathrm{gcd}(p, q)$ denotes the greatest common divisor of $p$ and $q$.

For every linear local rule $f(x_l, \ldots, x_r) = \Sigma_{i=l}^r \lambda_i x_i \pmod{m}$, there associates a formal power series $F(X) = \Sigma_{i=l}^r \lambda_i X^{-i}$. Let $T_f$ be the cellular automaton defined by the local rule $f$. Ito et al. characterize the necessary and sufficient condition for the invertibility of $T_f$.

\begin{theorem}[See \cite{ION-JCSS1983}]
$T_f$ is invertible if and only if for each prime factor $p | m$ there exists a unique $j_p$ such that $f$ is permutative at the index $j_p$.
\end{theorem}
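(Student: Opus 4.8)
The plan is to recast invertibility of $T_f$ as a unit condition in a Laurent polynomial ring, and then to analyze that ring one prime at a time.

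\textbf{Step 1: the cellular automaton as multiplication by $F(X)$.} I would identify a configuration $x=(x_n)\in\mathbb{Z}_m^{\mathbb{Z}}$ with the bi-infinite formal series $\sum_n x_n X^n$. A one-line computation then shows that $T_f$ acts as multiplication by the Laurent polynomial $F(X)=\sum_{i=l}^r\lambda_i X^{-i}\in\mathbb{Z}_m[X,X^{-1}]$, that composition of linear cellular automata corresponds to multiplication of the associated Laurent polynomials, and that the identity corresponds to $1$. The claim is that $T_f$ is invertible if and only if $F(X)$ is a unit in $\mathbb{Z}_m[X,X^{-1}]$. One direction is free: if $F(X)G(X)=1$ in $\mathbb{Z}_m[X,X^{-1}]$, then the linear cellular automaton $T_g$ associated with $G$ inverts $T_f$. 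For the converse, if $T_f$ is bijective then $T_f^{-1}$ is continuous by compactness of $\mathbb{Z}_m^{\mathbb{Z}}$, hence a cellular automaton by the Curtis--Hedlund--Lyndon theorem; it is additive, being the inverse of an additive bijection, and every additive cellular automaton over $\mathbb{Z}_m$ is linear, because a group homomorphism $\mathbb{Z}_m^k\to\mathbb{Z}_m$ has the form $v\mapsto\sum c_i v_i$ with $c_i\in\mathbb{Z}_m$. Thus $T_f^{-1}=T_g$ for some linear $g$ with Laurent polynomial $G$, and $T_f\circ T_g=\mathrm{id}$ forces $F(X)G(X)=1$.

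\textbf{Step 2: reduction to prime powers, then modulo $p$.} Write $m=\prod_{p\mid m}p^{a_p}$. The Chinese Remainder Theorem gives a ring isomorphism $\mathbb{Z}_m[X,X^{-1}]\cong\prod_{p\mid m}\mathbb{Z}_{p^{a_p}}[X,X^{-1}]$, so $F(X)$ is a unit if and only if each reduction $F_p:=F\bmod p^{a_p}$ is a unit in $\mathbb{Z}_{p^{a_p}}[X,X^{-1}]$. This splitting is genuinely needed: for composite $m$ one cannot detect invertibility from the $\lambda_i$ taken in $\mathbb{Z}_m$ alone, and this is precisely why the theorem lets the witnessing index $j_p$ vary with $p$. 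Next, inside $\mathbb{Z}_{p^a}[X,X^{-1}]$ the ideal $(p)$ is nilpotent because $p^a=0$; for any ring with a nil ideal, an element is a unit if and only if its image modulo that ideal is, since an inverse can be lifted by the terminating Neumann series $\sum_{k=0}^{a-1}(-pN)^k$. Hence $F_p$ is a unit in $\mathbb{Z}_{p^a}[X,X^{-1}]$ if and only if its reduction $\overline{F_p}$ is a unit in $\mathbb{Z}_p[X,X^{-1}]$, where $\mathbb{Z}_p$ is now a field.

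\textbf{Step 3: units of $\mathbb{Z}_p[X,X^{-1}]$ and conclusion.} Since $\mathbb{Z}_p$ is a field, $\mathbb{Z}_p[X,X^{-1}]$ is an integral domain, and comparing the lowest and highest exponents occurring in a product shows its units are exactly the monomials $cX^k$ with $c\in\mathbb{Z}_p\setminus\{0\}$ and $k\in\mathbb{Z}$. Therefore $\overline{F_p}=\overline{\sum_i\lambda_i X^{-i}}$ is a unit precisely when exactly one coefficient $\lambda_i$ is a unit modulo $p$ while $p\mid\lambda_i$ for every other $i$; writing $j_p$ for that index, this says exactly that $f$ is permutative at $j_p$ (viewing the rule over $\mathbb{Z}_p$) and that $j_p$ is the unique such index. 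Combining with Steps 1--2, $T_f$ is invertible if and only if this holds for every prime $p\mid m$, which is the asserted equivalence; moreover the backward direction is essentially constructive, since inverting each $\overline{F_p}$ as a monomial, correcting by the finite Neumann series, and gluing by CRT exhibits the inverse cellular automaton explicitly.

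\textbf{Expected main obstacle.} The only delicate points are in Step 1 — establishing that the inverse of a bijective linear cellular automaton is again \emph{linear}, so that it corresponds to a genuine element of $\mathbb{Z}_m[X,X^{-1}]$ rather than of some larger ring of bi-infinite series, whence $F$ must be a \emph{unit} there — and recognizing in Step 2 that passing to the prime-power (and then mod-$p$) factors is unavoidable. Once the correspondence is set up correctly, the ring theory in Steps 2 and 3 is routine.
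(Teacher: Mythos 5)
Your proof is correct, and in fact the paper offers nothing to compare it against: this theorem is imported from Ito--Osato--Nasu \cite{ION-JCSS1983} and stated without proof. Your route --- identify $T_f$ with multiplication by $F(X)$ on bi-infinite series, show invertibility of $T_f$ is equivalent to $F$ being a unit of $\mathbb{Z}_m[X,X^{-1}]$ (the nontrivial direction via compactness, Curtis--Hedlund--Lyndon, and the automatic linearity of the inverse's local rule, then $FG=1$ by applying $T_f\circ T_g=\mathrm{id}$ to the configuration $\delta_0$), split by CRT into prime-power factors, kill the nilpotent ideal $(p)$, and read off that the units of $\mathbb{Z}_p[X,X^{-1}]$ are the nonzero monomials --- is sound, and you correctly interpret ``permutative at $j_p$'' as permutativity of the mod-$p$ reduction (i.e.\ $\gcd(\lambda_{j_p},p)=1$ with $p\mid\lambda_i$ for $i\neq j_p$), which is the reading the paper itself uses when it specializes to $m=p^k$. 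The only steps you compress are routine: that $T_f^{-1}$ commutes with the shift (needed for CHL) and that additivity of the global inverse map forces additivity, hence linearity, of its local rule; both are immediate. It is also worth noting that your argument dovetails with the machinery the paper does develop: the correspondence $\widehat{\chi}$ and commutative diagram of Section \ref{sec:Proof-mixing} are exactly your Step 1, and the Manzini--Margara formula of Theorem \ref{thm:formula-InvF} is precisely the monomial-inverse-times-terminating-Neumann-series construction you sketch at the end of Step 3, so your proof simultaneously explains why that inverse formula holds.
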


For the case where $m = p^k$ for some prime number $p$ and $k \in \mathbb{N}$, it comes immediately that $T_f$ is invertible if and only if there exists a unique $j$ such that $\mathrm{gcd}(\lambda_j, p) = 1$. Manzini and Margara demonstrate the corresponding formal power series $F(X)$ is invertible in $\mathbb{Z}_m \llbracket X, X^{-1} \rrbracket$.

\begin{theorem}[See \cite{MM-JCSS1998}] \label{thm:formula-InvF}
Suppose $m = p^k$ and $T_f$ is invertible. Write $F(X) = \lambda_{j_p} X^{-j_p} + p H(X)$. Let $\widetilde{H}(X) = - \lambda_{j_p} X^{j_p} H(X)$. Then
$$
F^{-1}(X) = \lambda_{j_p}^{-1} X^{j_p} ( 1 + p \widetilde{H}(X) + \cdots + p^{k-1} \widetilde{H}^{k-1}(X)),
$$
where $\lambda_{j_p}^{-1}$ is the inverse element of $\lambda_{j_p}$ in $\mathbb{Z}_m$.
\end{theorem}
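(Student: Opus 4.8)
The plan is to verify directly that the displayed Laurent series is the two-sided inverse of $F(X)$ in $\mathbb{Z}_m\llbracket X, X^{-1}\rrbracket$, the whole argument resting on the fact that $p$ is nilpotent of exponent $k$ in $\mathbb{Z}_m=\mathbb{Z}_{p^k}$. First I would extract from the invertibility of $T_f$ (via the theorem of Ito et al.\ quoted above) the two algebraic facts needed: at the unique permutative index $j_p$ one has $\gcd(\lambda_{j_p}, m)=1$, so $\lambda_{j_p}$ is a unit in $\mathbb{Z}_m$ and $\lambda_{j_p}^{-1}$ is well defined; and for every $i \ne j_p$ the rule $f$ fails to be permutative at the index $i$, which, since $p$ is prime and $\gcd(\lambda_i, p) \in \{1, p\}$, forces $p \mid \lambda_i$. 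Consequently $\sum_{i \ne j_p}\lambda_i X^{-i}$ is $p$ times a Laurent polynomial, which is exactly the decomposition $F(X)=\lambda_{j_p} X^{-j_p} + p H(X)$ assumed in the statement.

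Next I would multiply $F(X)$ by the unit monomial $\lambda_{j_p}^{-1} X^{j_p}$; by the definition of $\widetilde H(X)$ this normalizes $F$ to the form $1 - p\widetilde H(X)$, so that $F(X)^{-1} = \bigl(1 - p\widetilde H(X)\bigr)^{-1}\lambda_{j_p}^{-1} X^{j_p}$ and the task is reduced to inverting $1 - p\widetilde H(X)$. Here is the key observation: since $\widetilde H(X)$ is a genuine (finitely supported) Laurent polynomial and $p^k=0$ in $\mathbb{Z}_m$, the element $p\widetilde H(X)$ is nilpotent with $\bigl(p\widetilde H(X)\bigr)^k = p^k\widetilde H(X)^k = 0$, and every intermediate power $p^i\widetilde H(X)^i$ is again a Laurent polynomial, so no convergence issue arises in $\mathbb{Z}_m\llbracket X, X^{-1}\rrbracket$. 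Therefore the finite sum $G(X)=\sum_{i=0}^{k-1} p^i\widetilde H(X)^i$ satisfies $\bigl(1 - p\widetilde H(X)\bigr)G(X) = 1 - p^k\widetilde H(X)^k = 1$, i.e.\ $G(X)=\bigl(1 - p\widetilde H(X)\bigr)^{-1}$; multiplying by $\lambda_{j_p}^{-1} X^{j_p}$ and invoking commutativity of the ring gives $F(X)^{-1} = \lambda_{j_p}^{-1} X^{j_p}\bigl(1 + p\widetilde H(X) + \cdots + p^{k-1}\widetilde H(X)^{k-1}\bigr)$, the claimed formula. As a byproduct $F^{-1}$ is itself a Laurent polynomial, re-establishing that it lies in $\mathbb{Z}_m\llbracket X, X^{-1}\rrbracket$.

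I do not anticipate a genuine obstruction: this is the textbook identity that $1/(1-\text{nilpotent})$ is a truncated geometric series. The two spots deserving a line of care are the deduction $p \mid \lambda_i$ for all $i \ne j_p$ from the invertibility criterion (which uses the primality of $p$), and checking that every product written down stays within the Laurent polynomials so the formal manipulations are legitimate. Should one prefer to avoid even this structural input, an equally valid route is simply to multiply the two displayed Laurent polynomials and verify coefficientwise that the cross terms telescope, leaving the constant series $1$.
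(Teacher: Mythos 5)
Your argument is correct and is the standard one: the paper itself does not prove this statement (it is quoted from Manzini--Margara \cite{MM-JCSS1998}), and your verification --- reduce to inverting $1-p\widetilde{H}(X)$ and use that $p\widetilde{H}(X)$ is nilpotent of order at most $k$, so the truncated geometric series $\sum_{i=0}^{k-1}p^i\widetilde{H}^i(X)$ is the inverse --- is exactly the computation behind the cited formula. One point deserves care: your normalization step ``$\lambda_{j_p}^{-1}X^{j_p}F(X)=1-p\widetilde{H}(X)$'' holds only if $\widetilde{H}(X)=-\lambda_{j_p}^{-1}X^{j_p}H(X)$, whereas the statement as printed defines $\widetilde{H}(X)=-\lambda_{j_p}X^{j_p}H(X)$; with that definition one gets $\lambda_{j_p}^{-1}X^{j_p}F(X)=1-p\lambda_{j_p}^{-2}\widetilde{H}(X)$, and the displayed inverse formula fails unless $\lambda_{j_p}^2\equiv 1 \pmod{p^k}$ (for $k\geq 2$ and $H\neq 0$ this is a genuine restriction). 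So you have in effect proved the corrected statement: either flag the exponent on $\lambda_{j_p}$ in the definition of $\widetilde{H}$ as a typo, or insert the one-line computation showing which definition makes the telescoping work. Your observations that $\lambda_{j_p}$ is a unit and that $p\mid\lambda_i$ for $i\neq j_p$ (so $F=\lambda_{j_p}X^{-j_p}+pH$ with $H$ a Laurent polynomial) are the right structural inputs from the invertibility criterion, and the remark that all manipulations stay inside Laurent polynomials disposes of any convergence issue.
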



Let $T: X \rightarrow X$ be a measure-preserving transformation on a probability space $(X, \mathcal{B}, \mu)$. $T$ is called \emph{strong mixing} if
$$
\lim_{n \to \infty} \mu(T^{-n}A \cap B)=\mu(A) \mu(B)
$$
for any $A, B\in \mathcal{B}$. Furthermore, $T$ is called $k$-mixing if for every given $\{A_i\}_{i=0}^k \subset \mathcal{B}$,
$$
\lim_{n_1, n_2, \ldots, n_k \to \infty} \mu(A_0 \cap T^{-n_1} A_1 \cap \cdots \cap T^{-(n_1 + \cdots + n_k)} A_k)=\mu(A_0) \mu(A_1) \cdots \mu(A_k).
$$
It is seen that strong mixing is $1$-mixing.

It is known that every surjective cellular automaton preserves the uniform Bernoulli measure (cf.~\cite{CP-MST1974,Kle-PAMS1997,She-MM1992} for instance). For the rest of this paper, $\mu$ refers to the uniform Bernoulli measure unless stated otherwise. Kleveland \cite{Kle-PAMS1997} and Shereshevsky \cite{She-MM1992, She-IMN1997} have proved that $T_f$ is strong mixing if $r < 0$ (resp.~$l > 0$) and $f$ is left permutative (resp.~right permutative); some of these cellular automata are even $k$-mixing. Recently, one-sided expansive invertible cellular automata and two-sided expansive permutation cellular automata have been demonstrated to be strong mixing (see \cite{BM-IJM1997, BM-JMSJ2000, JNY-AAM2013, JY-AAM2007, Nasu-TAMS2002}). Theorem \ref{thm:ILCA-Mixing} addresses the necessary and sufficient condition for whether an invertible linear cellular automaton is strong mixing, which is an extension and characterizes the strong mixing property of invertible linear cellular automata completely.

\begin{theorem}\label{thm:ILCA-Mixing}
An invertible linear cellular automaton $T_f$ is strong mixing with respect to the uniform Bernoulli measure if and only if $j_p \neq 0$ for every prime factor $p$ of $m$.
\end{theorem}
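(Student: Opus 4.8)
The plan is to pass to Fourier analysis on the compact abelian group $\mathbb{Z}_m^{\mathbb{Z}}$, whose normalized Haar measure is precisely the uniform Bernoulli measure $\mu$. The characters $\chi_{\mathbf{a}}(x)=\exp\!\big(\tfrac{2\pi i}{m}\sum_k a_k x_k\big)$, indexed by the finitely supported $\mathbf{a}\in\bigoplus_{\mathbb{Z}}\mathbb{Z}_m$, form an orthonormal basis of $L^2(\mu)$, so by the standard fact that strong mixing is equivalent to $\langle g\circ T_f^{\,n},h\rangle\to\langle g,1\rangle\langle1,h\rangle$ for $g,h$ ranging over a set with dense linear span, it suffices to examine $\langle\chi_{\mathbf{a}}\circ T_f^{\,n},\chi_{\mathbf{b}}\rangle$. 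Since $T_f$ is a group endomorphism, $\chi_{\mathbf{a}}\circ T_f=\chi_{\phi(\mathbf{a})}$ for the dual endomorphism $\phi$ of $\bigoplus_{\mathbb{Z}}\mathbb{Z}_m$, and orthonormality gives $\langle\chi_{\mathbf{a}}\circ T_f^{\,n},\chi_{\mathbf{b}}\rangle=\delta_{\phi^{n}(\mathbf{a}),\mathbf{b}}$. As $T_f$ is invertible, $\phi$ is a bijection, and this correlation then tends to $0=\langle\chi_{\mathbf{a}},1\rangle\langle1,\chi_{\mathbf{b}}\rangle$ for every $\mathbf{b}$ and every $\mathbf{a}\neq0$ if and only if $\phi$ has no nonzero periodic point. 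So the theorem reduces to: \emph{$\phi$ has a nonzero periodic point if and only if $j_p=0$ for some prime $p\mid m$.}

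The next step is to make $\phi$ explicit. Identifying $\bigoplus_{\mathbb{Z}}\mathbb{Z}_m$ with the Laurent polynomial ring $R:=\mathbb{Z}_m[X,X^{-1}]$ in the usual way, the dual of $T_f$ is multiplication by $F(X^{-1})=\sum_i\lambda_i X^{i}$; since $X\mapsto X^{-1}$ is a ring automorphism of $R$, periodicity under $\phi$ is equivalent to periodicity under multiplication by $F(X)$ itself, so I will argue with $F$. The crucial consequence of invertibility is supplied by the Ito et al. theorem together with the characterization of permutative linear rules: for each prime $p\mid m$ there is a \emph{unique} index at which $f$ is permutative, hence every coefficient $\lambda_i$ with $i\neq j_p$ is divisible by $p$, i.e.\ $F\equiv\lambda_{j_p}X^{-j_p}\pmod{pR}$ --- $F$ reduces to a single monomial modulo each prime divisor of $m$.

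For the ``only if'' direction, suppose $j_p=0$ for some $p\mid m$. Then $p\mid\lambda_i$ for all $i\neq0$, so the (nontrivial, well-defined) character $\chi(x)=\exp\!\big(\tfrac{2\pi i}{p}x_0\big)$ satisfies $\chi\circ T_f^{\,n}(x)=\exp\!\big(\tfrac{2\pi i}{p}\lambda_0^{\,n}x_0\big)$, which is periodic in $n$ with period the multiplicative order of $\lambda_0$ modulo $p$; hence $\langle\chi\circ T_f^{\,n},\chi\rangle=1$ for infinitely many $n$ and $T_f$ is not strong mixing. For the ``if'' direction, assume $j_p\neq0$ for every $p\mid m$ and suppose $F^{n}A=A$ in $R$ with $A\neq0$ and $n\geq1$. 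By the Chinese Remainder decomposition $R\cong\prod_{p\mid m}\mathbb{Z}_{p^{k_p}}[X,X^{-1}]$ one may pass to a single factor and assume $m=p^{k}$ with $A\neq0$; write $A=p^{s}A_0$ where $0\leq s<k$ and $A_0\notin pR$. Reducing $F^{n}A=A$ modulo $p^{s+1}R$, substituting $F^{n}\equiv\lambda_{j_p}^{\,n}X^{-nj_p}\pmod{pR}$, and using that $R$ is $\mathbb{Z}_{p^{k}}$-free (so multiplication by $p^{s}$ induces an injection $R/pR\hookrightarrow R/p^{s+1}R$) yields $\lambda_{j_p}^{\,n}X^{-nj_p}\,\overline{A_0}=\overline{A_0}$ in the integral domain $\mathbb{F}_p[X,X^{-1}]$ with $\overline{A_0}\neq0$; therefore $\lambda_{j_p}^{\,n}X^{-nj_p}=1$, which forces $nj_p=0$ and contradicts $n\geq1$, $j_p\neq0$.

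The step I expect to need the most care is the ``if'' direction over $\mathbb{Z}_{p^{k}}$ with $k\geq2$: there $R$ has zero divisors, so one cannot simply cancel $A$ from $(F^{n}-1)A=0$, and the device of stripping off the exact power of $p$ dividing $A$ and descending to the quotient domain $\mathbb{F}_p[X,X^{-1}]$ is what makes the elementary monomial comparison available. Secondary points that must be handled correctly are the precise duality identification (and the harmless substitution $X\mapsto X^{-1}$), the density argument reducing strong mixing to correlations of characters, and the verification that invertibility really does force every coefficient away from $j_p$ to be divisible by $p$.
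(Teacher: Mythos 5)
Your proof is correct, but it takes a genuinely different route from the paper. The paper works directly with cylinder sets: it first proves $(F(X))^{p^{k-1}} \equiv \lambda_{j_p}^{m} X^{-p^{k-1}j_p} \pmod{p^k}$ (Lemma \ref{lem:F^p-rule}), deduces that the block of coordinates on which $f^{\pm n}$ depends is translated off to infinity whenever $j_p \neq 0$, shows that $T_f^{-n}U$ is a finite union of cylinders whose support eventually misses that of any fixed cylinder $V$, giving exact independence for large $n$ (Lemma \ref{lem:mixing-m=pk}), and then assembles general $m$ via the Chinese Remainder decomposition (Lemma \ref{lem:T-mixing-iff-Ti-mixing}). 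You instead invoke the classical character criterion for automorphisms of compact abelian groups: since $\mu$ is Haar measure on $\mathbb{Z}_m^{\mathbb{Z}}$ and $T_f$ is a measure-preserving group automorphism, strong mixing is equivalent to the dual automorphism (multiplication by $F(X^{-1})$ on $\mathbb{Z}_m[X,X^{-1}]$) having no nonzero periodic point; you verify this by stripping the exact power of $p$ from a putative periodic point and descending to the integral domain $\mathbb{F}_p[X,X^{-1}]$, where invertibility forces $F$ to reduce to the monomial $\lambda_{j_p}X^{-j_p}$, while the converse uses the explicit character $\exp(2\pi i x_0/p)$, which has a finite $T_f$-orbit when $j_p = 0$. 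The points you flag do go through: the duality computation, the injectivity of $u \mapsto p^s u$ from $R/pR$ into $R/p^{s+1}R$, and the reduction of mixing to correlations of characters are all standard and correctly handled. What your route buys is brevity and conceptual clarity --- no computation of $F^{p^{k-1}}$, no cylinder bookkeeping, and the same criterion shows at once that ergodicity and mixing coincide here, which is consonant with Corollary \ref{cor:ILCA-NotErgodic}. What the paper's route buys is quantitative information (an explicit time $N$ after which cylinders become exactly independent), which is reused verbatim in the proof of Theorem \ref{thm:ILCA-Bernoulli}, and an argument that, as noted in Remark \ref{rmk:Result-for-more-measure}, extends to certain non-Haar invariant measures, whereas your character argument is tied to the uniform Bernoulli (Haar) measure.
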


The following corollary, which can be derived from the demonstration of Theorem \ref{thm:ILCA-Mixing} with a minor modification, addresses the ``opposite" result to the above theorem. Notably, a one-dimensional linear cellular automaton with local rule $f(x_{-r}, \ldots, x_r) = \Sigma_{i=-1}^r \lambda_i x_i \pmod{m}$ is ergodic if and only if $\mathrm{gcd}(\lambda_{-r}, \ldots, \lambda_{-1}, \lambda_1, \ldots, \lambda_r, m) = 1$ \cite{CFMM-TCS2000}. (In fact, a necessary and sufficient condition for the ergodicity of a multidimensional linear cellular automaton is demonstrated in \cite{CFMM-TCS2000}.) Corollary \ref{cor:ILCA-NotErgodic} indicates a concise if-and-only-if condition for the ergodicity of one-dimensional  invertible linear cellular automaton.

\begin{corollary}\label{cor:ILCA-NotErgodic}
An invertible linear cellular automaton $T_f$ is non-ergodic with respect to the uniform Bernoulli measure if and only if $j_p = 0$ for some prime factor $p$ of $m$.
\end{corollary}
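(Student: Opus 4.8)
The plan is to derive this directly from Theorem \ref{thm:ILCA-Mixing}, exactly as the surrounding text promises, by showing that for an invertible linear cellular automaton the dichotomy ``strong mixing versus non-ergodic'' holds with no gap in between. Since strong mixing implies ergodicity, Theorem \ref{thm:ILCA-Mixing} already gives: if $j_p \neq 0$ for every prime $p \mid m$, then $T_f$ is ergodic. So the content of the corollary is the converse direction, namely that if $j_p = 0$ for \emph{some} prime factor $p$ of $m$, then $T_f$ fails to be ergodic. I would establish this by exhibiting a nontrivial $T_f$-invariant set of intermediate measure, or equivalently a nonconstant $T_f$-invariant $L^2(\mu)$ function.

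First I would reduce to the prime-power case. Writing $m = p_1^{k_1} \cdots p_s^{k_s}$, the space $\mathbb{Z}_m^{\mathbb{Z}}$ factors (as a measure-preserving dynamical system under $T_f$) as the product of the systems over $\mathbb{Z}_{p_i^{k_i}}^{\mathbb{Z}}$, with the uniform Bernoulli measure splitting as a product by the Chinese Remainder Theorem, and $T_f$ acting coordinatewise via the reductions of the $\lambda_i$. A product system is ergodic only if every factor is, so it suffices to show that the factor corresponding to the prime $p$ with $j_p = 0$ is non-ergodic. Thus I may assume $m = p^k$ and that $f$ is permutative precisely at the index $0$, i.e.\ $\gcd(\lambda_0, p) = 1$ while $p \mid \lambda_i$ for all $i \neq 0$.

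Next, the key structural observation: when $j_p = 0$, the associated power series is $F(X) = \lambda_0 + p H(X)$, so by Theorem \ref{thm:formula-InvF} the inverse $F^{-1}(X) = \lambda_0^{-1}(1 + p\widetilde H(X) + \cdots + p^{k-1}\widetilde H^{k-1}(X))$ is again a ``shift-free'' Laurent polynomial whose constant term is a unit and all of whose other coefficients are divisible by $p$. Hence $T_f$ preserves the reduction map $\pi : \mathbb{Z}_{p^k}^{\mathbb{Z}} \to \mathbb{Z}_p^{\mathbb{Z}}$ sending $x$ to $(x_n \bmod p)_n$, and under $\pi$ the induced action is $(\pi T_f x)_n = \lambda_0 (\pi x)_n \bmod p$ — multiplication by the fixed unit $\lambda_0$ in each coordinate, with \emph{no spatial mixing at all}. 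This factor system is visibly non-ergodic: the function $g(y) = \chi(y_0)$, where $\chi$ is a fixed nontrivial additive character of $\mathbb{Z}_p$ chosen so that $\chi(\lambda_0 y) = \chi(y)$ if $\lambda_0 = 1$, or more robustly the function $y \mapsto y_0 + y_1 + \cdots + y_{d}$ for a suitable window when $\lambda_0 \neq 1$ has finite multiplicative order $d+1$ — in either case one produces a nonconstant $L^2$ function invariant under the induced map, which pulls back via $\pi$ to a nonconstant $T_f$-invariant function on $\mathbb{Z}_{p^k}^{\mathbb{Z}}$. Concretely one may simply take $A = \pi^{-1}(B)$ where $B \subset \mathbb{Z}_p^{\mathbb{Z}}$ is a cylinder event on a block of coordinates whose length is the multiplicative order of $\lambda_0$ modulo $p$ and which is invariant under coordinatewise multiplication by $\lambda_0$; then $\mu(A) \in (0,1)$ and $T_f^{-1}A = A$.

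The main obstacle is the bookkeeping in the last step: one must verify carefully that $T_f$ genuinely descends through $\pi$ (which is precisely where the $j_p = 0$ hypothesis and the explicit form of $F^{-1}$ from Theorem \ref{thm:formula-InvF} are used — for a general invertible $T_f$ the inverse involves a genuine shift $X^{j_p}$ and $\pi$ is \emph{not} a factor map in a way that keeps spatial interactions trivial), and that the induced ``diagonal multiplication by $\lambda_0$'' map on $\mathbb{Z}_p^{\mathbb{Z}}$ is non-ergodic for every unit $\lambda_0$, including $\lambda_0 = 1$ where it is the identity and the claim is immediate. Apart from that, the argument is a short deduction, and I expect the paper to present it as ``a minor modification of the proof of Theorem \ref{thm:ILCA-Mixing}'' precisely because the same reduction-mod-$p$ device underlies both: in Theorem \ref{thm:ILCA-Mixing} one shows the shift $X^{j_p}$ ($j_p \neq 0$) forces decay of correlations, and here its absence ($j_p = 0$) is exactly the failure of ergodicity.
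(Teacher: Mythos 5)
Your argument is correct, and its skeleton matches the paper's: the ``if'' direction is Theorem \ref{thm:ILCA-Mixing} together with the fact that strong mixing implies ergodicity, and the reduction to a single prime power via $\mu \cong \mu_{p_1^{k_1}} \times \cdots \times \mu_{p_h^{k_h}}$ and $T_f \cong T_{p_1^{k_1}} \times \cdots \times T_{p_h^{k_h}}$ is exactly the paper's Lemma \ref{lem:T-mixing-iff-Ti-mixing}, which the paper explicitly notes also holds with ``ergodic'' in place of ``strong mixing''. Where you genuinely differ is in how non-ergodicity is extracted when $j_p = 0$. The paper stays at the level of $\mathbb{Z}_{p^k}$ and iterates: by Lemma \ref{lem:F^p-rule} with $j_p = 0$, $(F(X))^{p^{k-1}}$ is a constant, so a power of $T_{p^k}$ is a coordinatewise constant multiplication (eventually the identity), i.e.\ $T_{p^k}$ is periodic; Example \ref{eg:j=0-not-ergodic} illustrates this by computing $f^{6k}$ and showing $T_f^{-6k}[0]_0 \cap [1]_0 = \varnothing$. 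You instead pass to the further factor $\pi: \mathbb{Z}_{p^k}^{\mathbb{Z}} \to \mathbb{Z}_p^{\mathbb{Z}}$: since $p \mid \lambda_i$ for $i \neq j_p = 0$, the induced map is coordinatewise multiplication by the unit $\lambda_0$, and you produce a $T_f$-invariant set of intermediate measure (the cleanest instantiation of your construction is simply $A = \{x : x_0 \equiv 0 \pmod p\}$, invariant of measure $1/p$). This buys something real: an invariant set for $T_f$ itself settles non-ergodicity outright, whereas arguing from triviality of a power $T^n$, or from emptiness of $T^{-6k}U \cap V$ along a subsequence of times, needs an extra word because non-ergodicity of $T^n$ does not formally transfer to $T$; your route also avoids Lemma \ref{lem:F^p-rule} entirely, and your appeal to Theorem \ref{thm:formula-InvF} is actually superfluous, since $\pi$ intertwines $T_f$ with multiplication by $\lambda_0$ merely because reduction mod $p$ is a ring homomorphism. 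Two small blemishes, neither fatal: the character trick works as stated only when $\lambda_0 = 1$ (as you acknowledge), and the ``cylinder on a block whose length is the multiplicative order of $\lambda_0$'' should be pinned down concretely (the all-zero block, or the union of the orbit of an arbitrary cylinder under the induced map), which your final $\pi^{-1}(B)$ construction does.
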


Recall that $T$ is ergodic if and only if $\lim_{n \to \infty} \mu(T^{-n}A \cap B) > 0$ for any $A, B\in \mathcal{B}$ with positive measures. Examples \ref{eg:m=4} and \ref{eg:m=12} interpret an intuitive idea for the reliability of Theorem \ref{thm:ILCA-Mixing}; the rigorous proof is postponed to Section \ref{sec:Proof-mixing}.

A stronger property in ergodic theory is \emph{Bernoulli automorphism}. Given $\epsilon \geq 0$, two partitions $\xi =\{C_i\}$ and $\eta=\{D_j\}$ of the measure space $(\mathbb{Z}^{\mathbb{Z}}_m, \mathcal{B}, \mu)$ are said to be $\epsilon$-\emph{independent} if
$$
\overset{}{\underset{i,\ j}{\sum}}|\mu(C_i \cap D_j)-\mu(C_i)\mu(D_j)| \leq \epsilon.
$$
$\xi$ and $\eta$ are independent if $\epsilon = 0$. A partition $\xi =\{C_i\}$ is called \emph{Bernoulli} for an automorphism $T_f$ if there exists an integer $N>0$ such that the partitions $\overset{0}{\underset{k=-n}{\bigvee}}T^k\xi$ and $\overset{N+n}{\underset{k=N}{\bigvee}}T^k\xi$ are independent for all $n\geq 0$. Furthermore, $T_f$ is a Bernoulli automorphism if $T_f$ has a generating Bernoulli partition.

Suppose the local rule $f$ is permutative in the variable $x_r$ (resp.~$x_l$) and $0 \leq l < r$ (resp.~$l < r \leq 0$), Shereshevsky showed that the natural extension of the dynamical system $(\mathbb{Z}_m^{\mathbb{Z}}, \mathcal{B}, \mu, T_f)$ is a Bernoulli automorphism \cite{She-MM1992, She-IMN1997}. It is well-known that a Bernoulli automorphism is also strong mixing. With the similar discussion of the demonstration of Theorem \ref{thm:ILCA-Mixing}, we extend the results to the class of invertible linear cellular automata.

\begin{theorem}\label{thm:ILCA-Bernoulli}
An invertible linear cellular automaton $T_f$ is a Bernoulli automorphism with respect to the uniform Bernoulli measure if and only if $j_p \neq 0$ for every prime factor $p$ of $m$.
\end{theorem}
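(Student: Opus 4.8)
The plan is to reduce the theorem to the prime-power case $m = p^k$ and then realize the invertible linear CA, after a suitable coordinate change, as a finite factor-tower of maps that are amenable to Shereshevsky's Bernoullicity criterion. First I would record the easy direction: if $j_p = 0$ for some prime factor $p$ of $m$, then by Corollary~\ref{cor:ILCA-NotErgodic} the map $T_f$ is non-ergodic, hence not a Bernoulli automorphism (every Bernoulli automorphism is ergodic). So the substance is the forward direction: assuming $j_p \ne 0$ for every prime $p \mid m$, show $T_f$ is Bernoulli. By the Chinese Remainder Theorem, $\mathbb{Z}_m^{\mathbb{Z}} \cong \prod_{p^k \| m} \mathbb{Z}_{p^k}^{\mathbb{Z}}$ as measure-preserving systems, the uniform Bernoulli measure factors as a product, and $T_f$ acts coordinatewise; since a finite direct product of Bernoulli automorphisms is Bernoulli (Ornstein theory), it suffices to treat $m = p^k$.

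For $m = p^k$ with invertible $T_f$ and unique permutative index $j := j_p \ne 0$, the idea is to use the explicit inverse from Theorem~\ref{thm:formula-InvF}. Conjugating by the shift $\sigma^{j}$ (which commutes with $T_f$ and preserves $\mu$) we may assume the permutative index of $F$ is $0$, so that the permutative index of $F^{-1}$ is $j \ne 0$; replacing $T_f$ by $T_f^{-1}$ if necessary — a Bernoulli automorphism has Bernoulli inverse — we may assume $T_f$ itself has a local rule whose unique permutative index is some $j > 0$ (the case $j < 0$ is symmetric via the reflection $x_i \mapsto x_{-i}$). Now I would filter $\mathbb{Z}_{p^k}$ by the ideals $p^i \mathbb{Z}_{p^k}$: writing each $x_n \in \mathbb{Z}_{p^k}$ in base $p$ as $x_n = \sum_{i=0}^{k-1} x_n^{(i)} p^i$ with $x_n^{(i)} \in \mathbb{Z}_p$, the action of $T_f$ on the top digit $x^{(k-1)}$ modulo lower-order data is the linear CA over $\mathbb{Z}_p$ given by $f \bmod p$, which is genuinely permutative at the single index $j$; the successive quotients $p^i\mathbb{Z}_{p^k}/p^{i+1}\mathbb{Z}_{p^k}$ carry, relative to the previous digits, the same $\mathbb{Z}_p$-linear permutative action. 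This exhibits $(\mathbb{Z}_{p^k}^{\mathbb{Z}}, \mu, T_f)$ as a tower of $k$ relatively-independent extensions, each with fiber the $\mathbb{Z}_p$-CA $T_{\bar f}$ where $\bar f$ has unique permutative index $j>0$. Because $j>0$ and $\bar f$ is right-permutative with $l \le 0 < r$ after normalization, Shereshevsky's theorem \cite{She-MM1992,She-IMN1997} gives that the base $\mathbb{Z}_p$-system is Bernoulli; and the tower extension being a relatively independent (skew) product of Bernoulli fibers over a Bernoulli base, Ornstein's machinery (a finite group extension, or more simply a sequence of relatively independent extensions, of a Bernoulli automorphism with Bernoulli fibers is Bernoulli) yields that $T_f$ over $\mathbb{Z}_{p^k}$ is Bernoulli.

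Concretely, to keep the argument elementary and self-contained — rather than invoking relative Ornstein theory — I would instead verify the Bernoulli-partition definition directly, in parallel with the proof of Theorem~\ref{thm:ILCA-Mixing}. Take $\xi$ to be the partition of $\mathbb{Z}_{p^k}^{\mathbb{Z}}$ into cylinders on the single coordinate $0$ (this is clearly generating, since the shift commutes with $T_f$ and its powers move the permutative window across all of $\mathbb{Z}$). The point is that because $j \ne 0$, applying $T_f^N$ shifts the ``essential support'' of the dependence of $(T_f^N x)_0$ on $x$ by roughly $Nj$ coordinates in a fixed direction, with a controlled spread; using the permutativity at the single index in each base-$p$ layer, a cylinder in $\bigvee_{k=-n}^{0} T_f^k \xi$ constrains only coordinates in a window $[\,-Rn,\,R\,]$ for a fixed $R$, while a cylinder in $\bigvee_{k=N}^{N+n} T_f^k \xi$ constrains (to leading order, layer by layer) coordinates near $[\,Nj - Rn,\, Nj + Rn\,]$; for $N$ large enough (depending on $n$) these windows are disjoint, and the layer-by-layer permutativity makes the conditional distributions exactly uniform, giving genuine independence, not merely $\epsilon$-independence. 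The main obstacle is precisely this bookkeeping over the $k$ base-$p$ layers: in the $\mathbb{Z}_{p^k}$ setting the ``carry'' interactions mean $(T_f^N x)_0$ depends on lower-order digits of $x$ at coordinates outside the naive window, so one must show these carry contributions do not destroy the window separation — this is where the explicit form of $F^{-1}$ in Theorem~\ref{thm:formula-InvF}, which bounds the degrees appearing in $\widetilde H^{i}$, is essential, and it is the step I expect to require the most care.
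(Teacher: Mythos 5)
Your reduction to $m=p^k$ via the Chinese Remainder Theorem and your treatment of the easy direction (non-ergodicity via Corollary~\ref{cor:ILCA-NotErgodic}) agree with the paper. However, your first, structural route for the prime-power case does not work as written. Conjugating by $\sigma^{j}$ is vacuous: $\sigma$ commutes with $T_f$, so $\sigma^{-j}T_f\sigma^{j}=T_f$ and you cannot normalize the permutative index to $0$ this way (composing with a shift gives a genuinely different transformation, not a conjugate one). Moreover, the base-$p$ digit tower is not a sequence of relatively independent extensions --- the carries make it a skew product, i.e.\ a \emph{compact} (not finite) group extension by $\mathbb{Z}_p^{\mathbb{Z}}$ --- and the Ornstein-type statement you invoke (``a finite group extension \ldots with Bernoulli fibers is Bernoulli'') is not available in that form; one would need something like Rudolph's theorem on weakly mixing compact group extensions of Bernoulli shifts, together with a verification of its hypotheses, none of which you supply. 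Shereshevsky's criterion also requires the window strictly to one side of $0$ with permutativity at the extreme index, not ``$l\le 0<r$ after normalization.''

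Your second, elementary route is the paper's actual strategy (a cylinder partition plus window bookkeeping parallel to Theorem~\ref{thm:ILCA-Mixing}), but it has two genuine gaps. First, the partition by the single coordinate $0$ is not generating in general: for instance when $T_f^{p^{k-1}}$ acts as the shift by $p^{k-1}j_p\ge 2$ (times a symbol permutation), the $T_f$-iterates of that partition only determine coordinates along a sparse set and miss the rest; your justification (``the shift commutes with $T_f$'') is irrelevant, since generation must come from powers of $T_f$ alone. The paper instead uses the cylinder partition $\xi_{-\ell}^{\ell}$ with width tied to $r-l$. Second, and more seriously, your window estimates are two-sided: you place $\bigvee_{k=-n}^{0}T_f^k\xi$ in a window $[-Rn,R]$ and $\bigvee_{k=N}^{N+n}T_f^k\xi$ near $[Nj-Rn,Nj+Rn]$, so disjointness forces $N$ to grow with $n$ --- and you say exactly that (``$N$ large enough (depending on $n$)''). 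But the definition of a Bernoulli partition requires a \emph{single} $N$ that works for all $n\ge 0$. The paper's proof hinges on the one-sided containments \eqref{eq:Tn-TNn-xi-n-multiple-pk}--\eqref{eq:TNn-xi-n-not-multiple-pk}, derived from \eqref{eq:f^n-range-multiple-of-m}--\eqref{eq:f^-n-range-non-multiple-of-m} (after normalizing $r\ge l\ge 0$): the past join spreads only to one side of a fixed coordinate while the join of $T_f^{N},\dots,T_f^{N+n}$ spreads only to the other side, so one choice $N=tp^{k-1}$ with $tp^{k-1}j_p\gtrsim 2\ell$ gives disjoint windows, hence independence, uniformly in $n$. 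Without establishing this one-sidedness (which is where Lemma~\ref{lem:F^p-rule} and Theorem~\ref{thm:formula-InvF} actually enter), your argument does not verify the Bernoulli-partition property.
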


\section{Examples} \label{sec:example}

This section clarifies the key ideas of the proof of Theorems \ref{thm:ILCA-Mixing} and \ref{thm:ILCA-Bernoulli} and Corollary \ref{cor:ILCA-NotErgodic} by examining three examples.

\begin{example}\label{eg:m=4}
Consider $m = 4 = 2^2$ and $f(x_1, x_2, x_3) = 2 x_1 + x_2 + 2 x_3 \pmod{4}$. It follows that $f^{-1}(x_{-3}, x_{-2}, x_{-1}) = 2 x_{-3} + x_{-2} + 2 x_{-1} \pmod{4}$, and
$$
f^{2n}(x_{4n}) = x_{4n} \pmod{4}, \qquad f^{-2n}(x_{-4n}) = x_{-4n} \pmod{4}.
$$
Given any other cylinder $[a_{L'}, \ldots, a_{R'}]_{L'}^{R'}$, there exists $N \in \mathbb{N}$ such that $R' < L+4n$ for $n \geq N$. It is well-known that all surjective CA preserve the uniform Bernoulli measure $\mu$. Therefore,
\begin{align*}
&\mu(T_f^{-k} [a_L, \ldots, a_R]_L^R \cap [a_{L'}, \ldots, a_{R'}]_{L'}^{R'}) \\
  &\qquad = \mu(T_f^{-k} [a_L, \ldots, a_R]_L^R) \mu([a_{L'}, \ldots, a_{R'}]_{L'}^{R'}) \\
  &\qquad = \mu([a_L, \ldots, a_R]_L^R) \mu([a_{L'}, \ldots, a_{R'}]_{L'}^{R'})
\end{align*}
for $k \geq 2N$. This demonstrates $T_f$ is strong mixing.
\end{example}

Next example investigates the case where $m$ has two distinct prime factors. The discussion can be extended to more general cases.

\begin{example}\label{eg:m=12}
Suppose $m = 12 = 2^2 \cdot 3$ and $f(x_0, x_1, x_2) = 6 x_0 + 3 x_1 + 2 x_2 \pmod{12}$. Let $\phi_4$ and $\phi_3$ be the canonical projections from $\mathbb{Z}_{12}$ to $\mathbb{Z}_4$ and $\mathbb{Z}_3$, respectively. Then $\Phi := \Phi_4 \times \Phi_3$ is an isomorphism from $\mathbb{Z}_{12}^{\mathbb{Z}}$ to $\mathbb{Z}_4^{\mathbb{Z}} \times \mathbb{Z}_3^{\mathbb{Z}}$, where $\Phi_4: \mathbb{Z}_{12}^{\mathbb{Z}} \to \mathbb{Z}_4^{\mathbb{Z}}$ and $\Phi_3: \mathbb{Z}_{12}^{\mathbb{Z}} \to \mathbb{Z}_3^{\mathbb{Z}}$ are obtained from $\phi_4$ and $\phi_3$, respectively.

Let $f_4$ and $f_3$ be defined as
\begin{align*}
f_4 (x_0, x_1, x_2) &= f(x_0, x_1, x_2) \pmod{4} \\
\intertext{and}
 f_3(x_0, x_1, x_2) &= f(x_0, x_1, x_2) \pmod{3},
\end{align*}
respectively. In other words,
\begin{align*}
f_4 (x_0, x_1, x_2) &= 2 x_0 + 3x_1 + 2x_2 \pmod{4}, \\
f_3(x_0, x_1, x_2) &= 2 x_2 \pmod{3}.
\end{align*}
Then the projections of $T_f$ on $\mathbb{Z}_4^{\mathbb{Z}}$ and $\mathbb{Z}_3^{\mathbb{Z}}$, denoted by $T_4$ and $T_3$, are the cellular automata with local rules $f_4$ and $f_3$, respectively. Furthermore, let $\mu_4$ and $\mu_3$ be the push-forward measures of $\Phi_4$ and $\Phi_3$, respectively. $\mu$ is the uniform Bernoulli measure on $\mathbb{Z}_{12}^{\mathbb{Z}}$ indicates that
\begin{enumerate}[\bf 1)]
\item $\mu_4$ and $\mu_3$ are the uniform Bernoulli measures on $\mathbb{Z}_4^{\mathbb{Z}}$ and $\mathbb{Z}_3^{\mathbb{Z}}$, respectively.
\item $\mu \cong \mu_4 \times \mu_3$.
\item $T_f \cong T_4 \times T_3$, and the diagram
$$
\xymatrix{
 \mathbb{Z}_{12}^{\mathbb{Z}} \ar[rr]^{T_{f}} \ar[d]_\Phi && \mathbb{Z}_{12}^{\mathbb{Z}} \ar[d]^{\Phi} \\
 \mathbb{Z}_4^{\mathbb{Z}} \times \mathbb{Z}_3^{\mathbb{Z}} \ar[rr]_{T_4\times T_3} && \mathbb{Z}_4^{\mathbb{Z}} \times \mathbb{Z}_3^{\mathbb{Z}}
}
$$
is commutative.
\end{enumerate}

Similar to the discussion of Example \ref{eg:m=4}, the local rule of $T_4^{-1}$ is expressed as
$$
f_4^{-1}(x_{-2}, x_{-1}, x_0) = 2 x_{-2} + 3x_{-1} + 2x_0 \pmod{4},
$$
and
$$
f_4^{2n}(x_{2n}) = x_{2n} \pmod{4}, \qquad f_4^{-2n}(x_{-2n}) = x_{-2n} \pmod{4}
$$
Given any other cylinder $U'= [a_{L'}, \ldots, a_{R'}]_{L'}^{R'}$, pick $N = \lfloor \frac{|R'-L|}{2} \rfloor + 1$. It follows immediately
\begin{equation}\label{eq:m=12-T4-mix-condition}
\mu_4(T_4^{-k} U_4 \cap U_4') = \mu_4(T_4^{-k} U_4) \mu_4(U_4') = \mu_4(U_4) \mu_4(U_4') \quad \text{for} \quad k \geq 2N
\end{equation}
since $L + k > R'$, where $A_j := \Phi_j(A)$ for $A \in \mathcal{B}$.

Similarly, the local rule of $T_3^{-1}$ is $f_3^{-1}(x_{-2}) = 2 x_{-2} \pmod{3}$. This infers
$$
f_3^{-n} = \left\{
            \begin{array}{ll}
              2 x_{-2n}, & \hbox{$n$ is odd;} \\
              x_{-2n}, & \hbox{$n$ is even.}
            \end{array}
          \right.
$$
It is seen that $\mu_3(T_3^{-n} U_3) = \mu_3(U_3)$ for all $n$ and
\begin{equation}\label{eq:m=12-T3-mix-condition}
\mu_3(T_3^{-k} U_3 \cap U_3') = \mu_3(T_3^{-k} U_3) \mu_3(U_3') = \mu_3(U_3) \mu_3(U_3') \quad \text{for} \quad k \geq N.
\end{equation}

Combining \eqref{eq:m=12-T4-mix-condition} and \eqref{eq:m=12-T3-mix-condition} we have, for $k \geq 2 N$,
\begin{align*}
\mu(T_f^{-k}U \cap U') &= (\mu_4 \times \mu_3)[(T^{-k}U \cap U')_4 \times (T^{-k}U \cap U')_3] \\
  &= \mu_4((T^{-k}U \cap U')_4) \mu_3((T^{-k}U \cap U')_3) \\
  &= \mu_4(T_4^{-k}U_4 \cap U'_4) \mu_3(T_3^{-k}U_3 \cap U'_3) \\
  &= (\mu_4 \times \mu_3) (U_4 \times U_3) \cdot (\mu_4 \times \mu_3) (U_4' \times U_3') = \mu(U) \mu(U').
\end{align*}
The strong mixing property of $T_f$ then follows.
\end{example}

Next example addresses that $T_f$ is even non-ergodic if $j_p = 0$ for some $p|m$.

\begin{example}\label{eg:j=0-not-ergodic}
Let $m = 36 = 2^2 \cdot 3^2$ and let $f$ be given as
$$
f(x_{-1}, x_0, x_1) = 15 x_{-1} + 10 x_0 + 6 x_1 \pmod{36}.
$$
It is seen that $j_2 = -1$ and $j_3 = 0$. To deduce that $T_f$ is not ergodic, we firstly observe that
$$
f^{6k} = 9 x_{-6k} + 28 x_0 \pmod{36} \quad \text{and} \quad f^{-6k} = 28 x_0 + 9 x_{6k} \pmod{36}
$$
for all $k \in \mathbb{N}$. Suppose $U = [0]_0$. Then $\alpha_0$ is a multiple of $9$ for each $\alpha = (\alpha_i) \in T_f^{-6k}(U)$. Hence $T_f$ is not ergodic since $T_f^{-6k}(U) \cap [1]_0 = \varnothing$ for all $k$.
\end{example}

\section{Proof of Theorem \ref{thm:ILCA-Mixing}} \label{sec:Proof-mixing}

This section is devoted to demonstrating an invertible linear cellular automaton is strong mixing if and only if $j_p \neq 0$ for all prime factor $p$ of the integer $m$. One can verify without difficulty that the existence of prime factor $p$ of $m$ such that $j_p = 0$ infers such a cellular automaton is not ergodic, thus it is not strong mixing. (An intuitive exploration is that $j_p = 0$ for some prime factor $p$ indicates $T_p^n$ is a trivial shift map for some $n \in \mathbb{N}$, where $T_p \equiv T \pmod{p^k}$ and $p^k |m, p^{k+1} \nmid m$.) It remains to show the ``if" part of Theorem \ref{thm:ILCA-Mixing}.

Let $\mathcal{L}$ be the collection of linear local rules and let
$$
\mathbb{Z}_m[X, X^{-1}] = \{\Sigma_{i=l}^{r} a_i x^i, l, r \in \mathbb{Z}\}
$$
be the set of bi-polynomials of variable $X$. Define a mapping $\chi: \mathcal{L} \to \mathbb{Z}_m[X, X^{-1}]$, which relates a linear local rule $f$ to a bi-polynomial $F(X)$, as
$$
\chi (f) = \chi (\sum_{i=l}^{r} \lambda_i x_i) = \sum_{i=l}^{r} \lambda_i X^{-i} := F(X).
$$
It is easily seen that $\chi$ is bijective. Moreover, let $\mathbb{Z}_m \llbracket X, X^{-1} \rrbracket$ denote the formal power series generated by $\{X, X^{-1}\}$ over $\mathbb{Z}_m$. Then $\widehat{\chi}: \mathbb{Z}_m^{\mathbb{Z}} \to \mathbb{Z}_m \llbracket x, x^{-1} \rrbracket$ defined by
$$
\widehat{\chi}(\mathbf{b})=\sum^\infty_{i=-\infty} b_i X^i, \quad \text{where} \quad \mathbf{b} = (b_i)_{i \in \mathbb{Z}} \in \mathbb{Z}_m^{\mathbb{Z}}
$$
is also a bijection. Observe that, for each $\mathbf{b} = (b_i) \in \mathbb{Z}_m^{\mathbb{Z}}$,
\begin{align*}
\widehat{\chi}(T_{f}(\mathbf{b})) &= \widehat{\chi}\left[(\sum^{r+i}_{n=l+i} \lambda_{n-i} b_n)_i\right] = \sum^{\infty}_{i=-\infty} \left(\sum^{r+i}_{n=l+i} \lambda_{n-i} b_n\right) X^i, \\
\intertext{and}
\mathbb{T}(\widehat{\chi}(\mathbf{b})) &= \mathbb{T}\left(\sum^{\infty}_{i=-\infty} b_i X^i\right) \\
  &= \left(\sum^{r}_{n=l} \lambda_n X^{-n}\right) \left(\sum^{\infty}_{i=-\infty} b_i X^i\right) = \sum^{\infty}_{i=-\infty} \left(\sum^{r+i}_{n=l+i} \lambda_{n-i} b_n\right) X^i,
\end{align*}
where $\mathbb{T}(\Theta(X)) := F(X) \Theta(X)$. This implements that the diagram
\begin{equation}\label{diagram}
\xymatrix{
\mathbb{Z}_m^{\mathbb{Z}} \ar[rr]^T \ar[d]_{\widehat{\chi}} && \mathbb{Z}_m^{\mathbb{Z}} \ar[d]^{\widehat{\chi}} \\
\mathbb{Z}_m \llbracket X, X^{-1} \rrbracket \ar[rr]_{\mathbb{T}} && \mathbb{Z}_m \llbracket X, X^{-1} \rrbracket}
\end{equation}
commutes. Moreover, it follows immediately from the mathematical induction that $f^n = \chi^{-1} ((F(X))^n)$ for all $n \in \mathbb{N}$, where $f^n = f \circ f^{n-1}$.

Suppose $m = p^k$ for some prime number $p$ and $k \in \mathbb{N}$. Write $F(X)$ as $F(X) = \lambda_{j_p} X^{-j_p} + p H(X)$.
\begin{lemma}\label{lem:F^p-rule}
$(F(X))^{p^{k-1}} \equiv \lambda_{j_p}^m X^{-p^{(k-1)}j_p} \pmod{p^k}$.
\end{lemma}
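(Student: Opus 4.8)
The plan is to expand $(F(X))^{p^{k-1}}$ by the binomial theorem, writing $F(X) = a + pH(X)$ with $a := \lambda_{j_p} X^{-j_p}$, to show that every contribution carrying a positive power of $p$ from the expansion is in fact divisible by $p^k$, and then to correct the exponent on the coefficient by Euler's theorem. Concretely, the goal is first to establish
\[
(F(X))^{p^{k-1}} \equiv a^{p^{k-1}} = \lambda_{j_p}^{p^{k-1}} X^{-p^{k-1} j_p} \pmod{p^k}
\]
in $\mathbb{Z}_m[X, X^{-1}]$, and then to observe that $\lambda_{j_p}^{p^{k-1}} \equiv \lambda_{j_p}^{m} \pmod{p^k}$; combining the two congruences gives the lemma.

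For the first congruence I would rather not invoke Kummer's formula for $v_p\binom{p^{k-1}}{i}$ directly, and instead argue by induction on $j \geq 0$ that $(a + pH)^{p^{j}} \equiv a^{p^{j}} \pmod{p^{j+1}}$ (all computations carried out over $\mathbb{Z}[X,X^{-1}]$ and reduced at the end). The case $j = 0$ is immediate. For the inductive step, write $(a + pH)^{p^{j}} = a^{p^{j}} + p^{j+1} K$, raise to the $p$-th power, and expand:
\[
(a + pH)^{p^{j+1}} = a^{p^{j+1}} + \sum_{i=1}^{p-1} \binom{p}{i} \bigl(a^{p^{j}}\bigr)^{p-i} p^{(j+1)i} K^{i} + p^{(j+1)p} K^{p}.
\]
Each middle term carries the factor $p \mid \binom{p}{i}$ together with $p^{(j+1)i}$, so its $p$-adic valuation is at least $1 + (j+1)i \geq j+2$; the last term has valuation at least $(j+1)p \geq j+2$ since $p \geq 2$. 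Thus $(a+pH)^{p^{j+1}} \equiv a^{p^{j+1}} \pmod{p^{j+2}}$, and taking $j = k-1$ yields the displayed congruence.

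Finally, since $T_f$ is invertible with $m = p^k$, the local rule $f$ is permutative at the index $j_p$, i.e.\ $\gcd(\lambda_{j_p}, p) = 1$, so $\lambda_{j_p}$ is a unit in $\mathbb{Z}_{p^k}$ and Euler's theorem gives $\lambda_{j_p}^{\varphi(p^k)} = \lambda_{j_p}^{p^{k-1}(p-1)} \equiv 1 \pmod{p^k}$. Hence $\lambda_{j_p}^{m} = \lambda_{j_p}^{p^{k}} = \lambda_{j_p}^{p^{k-1}} \cdot \lambda_{j_p}^{p^{k-1}(p-1)} \equiv \lambda_{j_p}^{p^{k-1}} \pmod{p^k}$, and the lemma follows. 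I do not anticipate a genuine obstacle here; the only point requiring care is the bookkeeping of $p$-adic valuations in the inductive step (in particular the top term $i = p$ and the inequality $(j+1)p \geq j+2$ for every $j \geq 0$), and keeping the computation in the Laurent polynomial algebra so that $a^{p^{k-1}}$ is literally the monomial $\lambda_{j_p}^{p^{k-1}} X^{-p^{k-1} j_p}$.
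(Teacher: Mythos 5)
Your proposal is correct and follows essentially the same route as the paper: write $F(X)=\lambda_{j_p}X^{-j_p}+pH(X)$, show by induction with the binomial theorem that $(F(X))^{p^{k-1}}\equiv \lambda_{j_p}^{p^{k-1}}X^{-p^{k-1}j_p}\pmod{p^k}$, and then replace $\lambda_{j_p}^{p^{k-1}}$ by $\lambda_{j_p}^{p^k}=\lambda_{j_p}^m$ via Euler's theorem. Your bookkeeping is in fact a bit more careful than the paper's (you check the middle terms and the top term $i=p$ explicitly, where the paper simply truncates the binomial sum), but the underlying argument is the same.
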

\begin{proof}
Observe that
\begin{align*}
(\lambda_{j_p} X^{-j_p} + p H(X))^p &= \sum^p_{j=0} \binom{p}{j} (\lambda_{j_p} X^{-j_p})^j(p H(X))^{p-j} \\
  &\equiv \sum^p_{j=p-1} \binom{p}{j} (\lambda_{j_p} X^{-j_p})^j (p H(X))^{p-j} \pmod{p^2} \\
  &= p(\lambda_{j_p} X^{-j_p})^{p-1} (p H(X)) + (\lambda_{j_p} X^{-j_p})^p \\
  &\equiv \lambda_{j_p}^p X^{-p j_p} \pmod{p^2}
\end{align*}
Given $n \in \mathbb{N}$, let
$$
\phi(n) = \# \{i: 1 \leq i \leq n, \mathrm{gcd}(i, n) = 1\}
$$
be \emph{Euler's totient function}. Euler indicated that
$$
a^{\phi(n)} \equiv 1 \pmod{n} \quad \text{for all} \quad \mathrm{gcd}(a, n) = 1.
$$
More specifically, $a^{p^{k-1}} \equiv a^{p^k} \pmod{p^k}$ for all $\mathrm{gcd}(a, p) = 1$. This implies
$$
(\lambda_{j_p} X^{-j_p} + p H(X))^p \equiv \lambda_{j_p}^{p^2} X^{-p j_p} \pmod{p^2}
$$
Assume that Lemma \ref{lem:F^p-rule} holds for $m = p^{k-1}$, that is,
$$
(F(X))^{p^{k-2}} = p^{k-1} Q(X) + \lambda_{j_p}^{p^{k-2}} X^{-p^{(k-2)}j_p}
$$
for some $Q(X)$. Therefore,
\begin{align*}
(F(X))^{p^{k-1}} &= \left(p^{k-1} Q(X) + \lambda_{j_p}^{p^{k-2}} X^{-p^{(k-2)}j_p}\right)^p \\
  &= \sum^p_{j=0}\binom{p}{j}(p^{k-1} Q(X))^j\left(\lambda_{j_p}^{p^{k-2}} X^{-p^{(k-2)}j_p}\right)^{p-j} \\
  &\equiv \sum^1_{j=0}\binom{p}{j}(p^{k-1} Q(X))^j\left(\lambda_{j_p}^{p^{k-2}} X^{-p^{(k-2)}j_p}\right)^{p-j} \pmod{p^{k}} \\
  &\equiv \lambda_{j_p}^{p^{k-1}} X^{-p^{(k-1)}j_p} \quad \pmod{p^{k}} \\
  &\equiv \lambda_{j_p}^{p^{k}} X^{-p^{(k-1)}j_p} \pmod{p^{k}}
\end{align*}
This completes the proof.
\end{proof}

Suppose $T_f$ is an invertible linear cellular automaton. Theorem \ref{thm:formula-InvF} infers that $T_f^{-1}$ is a linear cellular automaton with local rule $f^{-1} = \chi^{-1}(F^{-1}(X))$, where
$$
F^{-1}(X) = \lambda_{j_p}^{-1} X^{j_p} ( 1 + p \widetilde{H}(X) + \cdots + p^{k-1} \widetilde{H}^{k-1}(X))
$$
and $\widetilde{H}(X) = - \lambda_{j_p} X^{j_p} H(X)$. Since $F^{-1}(X)$ is also of the form $F^{-1}(X) = \lambda_{j_p}^{-1} X^{j_p} + p \overline{H}(X)$, it follows from Lemma \ref{lem:F^p-rule} that
\begin{equation}
(F^{-1}(X))^{p^{k-1}} \equiv \lambda_{j_p}^{-m} X^{p^{k-1} j_p} \pmod{p^k}.
\end{equation}
For the clarification of the discussion, the notation $g \leftrightarrow [t_1, t_2]$ refers to the local rule $g(x_{t_1}, \ldots, x_{t_2}) = \Sigma_{i=t1}^{t2} \lambda_i x_i \pmod{m}$. Combining the one-to-one correspondence between $\mathcal{L}$ and $\mathbb{Z}_m[X, X^{-1}]$, Lemma \ref{lem:F^p-rule}, and the commutative diagram \eqref{diagram}, a straightforward examination deduces that
\begin{equation}\label{eq:f^n-range-multiple-of-m}
f^n \leftrightarrow [\ell p^{k-1} j_p], \quad f^{-n} \leftrightarrow [-\ell p^{k-1} j_p],
\end{equation}
if $n= \ell p^{k-1}$ for some $\ell \in \mathbb{N}$, and
\begin{align}
f^n &\leftrightarrow [\ell p^{k-1} j_p + \ell' l,\ \ell p^{k-1} j_p + \ell' r], \label{eq:f^n-range-non-multiple-of-m} \\
f^{-n} &\leftrightarrow [-\ell p^{k-1} j_p + \ell' \overline{l},\ -\ell p^{k-1} j_p + \ell' \overline{r}], \label{eq:f^-n-range-non-multiple-of-m}
\end{align}
if $n= \ell p^{k-1} + \ell'$ for some $\ell \in \mathbb{N}, 1 \leq \ell' < p^{k-1}$, where
$$
\overline{l} = (l-j_p)(k-1) - j_p, \quad \overline{r} = (r-j_p)(k-1) - j_p.
$$

The strong mixing property of invertible cellular automaton for the case where $m$ is a multiple power of a prime number follows via \eqref{eq:f^n-range-multiple-of-m}, \eqref{eq:f^n-range-non-multiple-of-m}, and \eqref{eq:f^-n-range-non-multiple-of-m}.

\begin{lemma}\label{lem:mixing-m=pk}
Suppose $m = p^k$ for some prime number $p$ and $k \in \mathbb{N}$. Then an invertible linear cellular automaton is strong mixing if and only if $j_p \neq 0$.
\end{lemma}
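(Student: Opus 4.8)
The plan is to prove the two implications separately; the "only if" part is quick, and the real work is in the "if" part.

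For the "only if" direction I would assume $j_p = 0$ and exhibit a nontrivial invariant set. Since $m = p^k$ and $T_f$ is invertible, $p \mid \lambda_i$ for every index $i \neq 0$ while $\gcd(\lambda_0, p) = 1$, so $(T_f x)_0 \equiv \lambda_0 x_0 \pmod{p}$; hence $A = \{x \in \mathbb{Z}_m^{\mathbb{Z}} : x_0 \equiv 0 \pmod{p}\}$ satisfies $T_f^{-1}A = A$. As $0 < \mu(A) = 1/p < 1$, the transformation $T_f$ is non-ergodic and therefore not strong mixing; this is the remark already made in the paragraph preceding the lemma.

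For the "if" direction, assume $j_p \neq 0$; by symmetry I may take $j_p > 0$ (when $j_p < 0$ the window built below drifts to $-\infty$ instead of $+\infty$, and the argument is identical). Because cylinders generate $\mathcal{B}$, it suffices to verify $\mu(T_f^{-n}A \cap B) = \mu(A)\mu(B)$ for all sufficiently large $n$ when $A = [a_L,\ldots,a_R]_L^R$ and $B = [b_{L'},\ldots,b_{R'}]_{L'}^{R'}$ are cylinders. Writing $n = \ell p^{k-1} + \ell'$ with $0 \le \ell' < p^{k-1}$, Lemma \ref{lem:F^p-rule} together with \eqref{eq:f^n-range-multiple-of-m} and \eqref{eq:f^n-range-non-multiple-of-m} tells me that the local rule of $f^n$ is supported inside the interval $[\ell p^{k-1}j_p + \ell' l,\ \ell p^{k-1}j_p + \ell' r]$. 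Hence $(T_f^n x)_i$ depends only on the coordinates $x_j$ with $i + \ell p^{k-1}j_p + \ell' l \le j \le i + \ell p^{k-1}j_p + \ell' r$, and letting $i$ run over $[L,R]$ shows that $T_f^{-n}A$ is measurable with respect to the coordinates lying in the window $W_n = [\, L + \ell p^{k-1}j_p + \ell' l,\ R + \ell p^{k-1}j_p + \ell' r \,]$.

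The crux is the following observation about $W_n$: its width $R - L + \ell'(r-l) \le R - L + (p^{k-1}-1)(r-l)$ is bounded independently of $n$, whereas its left endpoint is at least $L + \ell p^{k-1}j_p - (p^{k-1}-1)|l|$ and hence tends to $+\infty$, since $\ell = \lfloor n/p^{k-1} \rfloor \to \infty$ and $j_p \ge 1$; in other words the bounded perturbation $\ell' l$ coming from the non-power part of $n$ never overtakes the linear drift $\ell p^{k-1}j_p$. Choosing $N$ with $\min W_n > R'$ for all $n \ge N$, the windows $W_n$ and $[L', R']$ are disjoint. Since $\mu$ is a product measure across the coordinates, the events $T_f^{-n}A$ (measurable over $W_n$) and $B$ (measurable over $[L', R']$) are $\mu$-independent for $n \ge N$, and because the invertible, hence surjective, map $T_f$ preserves $\mu$ I conclude $\mu(T_f^{-n}A \cap B) = \mu(T_f^{-n}A)\mu(B) = \mu(A)\mu(B)$ for all $n \ge N$, establishing strong mixing. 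Everything past this observation is the routine fact that cylinders supported on disjoint sets of coordinates are independent under a product measure, together with measure preservation; so the only genuine obstacle is reading off from the range formulas that the support window of $f^n$ stays of bounded width yet escapes monotonically to infinity exactly when $j_p \neq 0$.
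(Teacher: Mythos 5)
Your proof is correct and takes essentially the same route as the paper: Lemma \ref{lem:F^p-rule} and the resulting range formulas show that the support of $f^n$ has bounded width but drifts by $\ell p^{k-1} j_p$, so for large $n$ the set $T_f^{-n}A$ is measurable over a coordinate window disjoint from that of $B$, whence independence under the Bernoulli measure plus measure preservation give mixing. The only differences are cosmetic: you read off the measurability of $T_f^{-n}A$ over $[L,R]+\mathrm{supp}(f^n)$ directly instead of going through Claim \ref{claim:TU-TinvU-cylinder} and the inverse-rule range \eqref{eq:f^-n-range-non-multiple-of-m}, and in the ``only if'' direction you exhibit the explicit invariant set $\{x : x_0 \equiv 0 \pmod p\}$ rather than citing \eqref{eq:f^n-range-multiple-of-m}.
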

\begin{proof}
The ``only if" part follows immediately from \eqref{eq:f^n-range-multiple-of-m}. Given any two cylinders $U, V \in \mathbb{Z}_m^{\mathbb{Z}}$, write $U$ and $V$ as $U = [U]_{u_l}^{u_r}$ and $V = [V]_{v_l}^{v_r}$, respectively, for some $u_l, u_r, v_l, v_r \in \mathbb{Z}$. Herein the notation $[\alpha]_{i_1}^{i_2}$, $i_1 \leq i_2$, refers to the cylinder
\begin{align*}
\mbox{}_{i_1}[\alpha_{i_1}, \ldots, \alpha_{i_2}]_{i_2} = \{ x \in \mathbb{Z}_m^{\mathbb{Z}}: x_j = \alpha_j, i_1 \leq j \leq i_2\}.
\end{align*}

\begin{claim} \label{claim:TU-TinvU-cylinder}
$T_f^{-n} U$ is a finite union of cylinders for every $n \in \mathbb{Z}$.
\end{claim}
To see this, it suffices to show the case where $n = 1$.

Obviously $T_f^{-1} U \subseteq [U']_{u_l'}^{u_r'}$, where
\begin{align*}
u_l' &= \min \{u_l - (l-j_p)(k-1) + j_p, u_l - (r-j_p)(k-1) + j_p\}, \\
u_r' &= \max  \{u_r - (l-j_p)(k-1) + j_p, u_r - (r-j_p)(k-1) + j_p\}.
\end{align*}
$\mathrm{gcd}(\lambda_{j_p}^{-1}, m) \equiv 1 \pmod{m}$ indicates $f^{-1}(x_{l'}, \ldots, x_{r'})$ is a permutation at $x_{-j_p}$, where $l' = (l-j_p)(k-1) - j_p$, $r' = (r-j_p)(k-1) - j_p$, and $l' \leq -j_p \leq r'$ or $r' \leq - j_p \leq l'$. A straightforward and careful verification deduces
$$
(T_f^{-1}U)_i := \{x_i: x = (x_j) \in T_f^{-1} U\} = \mathbb{Z}_m
$$
provided
$$
i \in \mathbb{Z} \setminus \{\min\{u_l', u_r'\}, \min\{u_l', u_r'\} + 1, \ldots, \max\{u_l', u_r'\}\}.
$$
In other words, $T_f^{-1} U$ is a finite union of cylinders, and Claim \ref{claim:TU-TinvU-cylinder} follows.

Since $T_f^{-n} U$ is a finite union of cylinders, \eqref{eq:f^n-range-multiple-of-m} and \eqref{eq:f^-n-range-non-multiple-of-m} imply that there exists $N \in \mathbb{N}$ such that $T_f^{-n} U \subseteq [U']_{u_l'}^{u_r'}$ satisfies either $u_l' > v_r$ or $u_r' < v_l$ for $n \geq N$. It follows that
$$
\mu(T_f^{-n} U \cap V) = \mu(T_f^{-n} U) \mu(V) = \mu(U) \mu(V)
$$
for $n \geq N$ since $\mu$ is $T_f$-invariant. This demonstrates the strong mixing property of invertible linear cellular automata for the case where $m = p^k$.
\end{proof}

Notably, for every $n \in \mathbb{N}$ and cylinder $U$, neither $T_f^{-n} U$ nor $T_f^{n} U$ are cylinders in general. It is seen in the proof of Lemma \ref{lem:mixing-m=pk} that $T_f^{-1} U$ is a sub-cylinder of cylinder $[U']_{u_l'}^{u_r'}$ with $x_{u_l'}, \ldots, x_{u_r'}$ being constrained by some equations came from $f$ for all $x \in T_f^{-1} U$ (cf.~Examples \ref{eg:m=4} and \ref{eg:m=12}).

Suppose $m = sq$ for some coprime factors $s, q \in \mathbb{N}$. Define $f_s: \mathbb{Z}_s^{r-l+1} \to \mathbb{Z}_s$ and $f_q: \mathbb{Z}_q^{r-l+1} \to \mathbb{Z}_q$ by
\begin{align*}
f_s(x_{l}, \ldots, x_r) &= f(x_{l}, \ldots, x_r) \pmod{s} \\
\intertext{and}
f_q(x_{l}, \ldots, x_r) &= f(x_{l}, \ldots, x_r) \pmod{q},
\end{align*}
respectively. Then $f_s, f_q$ generate invertible cellular automata $T_s: \mathbb{Z}_s^{\mathbb{Z}} \to \mathbb{Z}_s^{\mathbb{Z}}$ and $T_q: \mathbb{Z}_q^{\mathbb{Z}} \to \mathbb{Z}_q^{\mathbb{Z}}$. Observe that the canonical isomorphism $\phi: \mathbb{Z}_m \to \mathbb{Z}_s \times \mathbb{Z}_q$ induces an isomorphism $\Phi: \mathbb{Z}_m^{\mathbb{Z}} \to \mathbb{Z}_s^{\mathbb{Z}} \times \mathbb{Z}_q^{\mathbb{Z}}$. A straightforward examination shows that the diagram
\begin{equation}\label{diagram:Zm-ZpxZq}
\xymatrix{
 \mathbb{Z}_m^{\mathbb{Z}} \ar[rr]^{T_{f}} \ar[d]_\Phi && \mathbb{Z}_m^{\mathbb{Z}} \ar[d]^{\Phi} \\
 \mathbb{Z}_s^{\mathbb{Z}} \times \mathbb{Z}_q^{\mathbb{Z}} \ar[rr]_{T_s\times T_q} && \mathbb{Z}_s^{\mathbb{Z}} \times \mathbb{Z}_q^{\mathbb{Z}}
}
\end{equation}
commutes.

Furthermore, let $\mu_s$ and $\mu_q$ be the push-forward measures of $\mu$ on $\mathbb{Z}_s^{\mathbb{Z}}$ and $\mathbb{Z}_q^{\mathbb{Z}}$ with respect to canonical projections $\Phi_s$ and $\Phi_q$, respectively. It follows that $\mu \cong \mu_p \times \mu_q$. For any two measurable sets $U, V \in \mathbb{Z}_m^{\mathbb{Z}}$ such that
\begin{align*}
\mu_s(\Phi_s(U) \cap \Phi_s(V)) &= \mu_s(\Phi_s(U)) \cdot \mu_s(\Phi_s(U)) \\
\intertext{and}
\mu_q(\Phi_q(U) \cap \Phi_q(V)) &= \mu_q(\Phi_q(U)) \cdot \mu_q(\Phi_q(V)),
\end{align*}
one can see that
\begin{align*}
(\mu_s \times \mu_q)(\Phi(U \cap V)) &= \mu_s(\Phi_s(U \cap V)) \cdot \mu_q(\Phi_q(U \cap V)) \\
  &= \mu_s(\Phi_s(U) \cap \Phi_s(V)) \cdot \mu_q(\Phi_q(U) \cap \Phi_q(V)) \\
  &= [\mu_s(\Phi_s(U)) \cdot \mu_s(\Phi_s(V))] \cdot [\mu_q(\Phi_q(U)) \cdot \mu_q(\Phi_q(V))] \\
  &= [\mu_s(\Phi_s(U)) \cdot \mu_q(\Phi_q(U))] \cdot [\mu_s(\Phi_s(V)) \cdot \mu_q(\Phi_q(V))] \\
  &= (\mu_s \times \mu_q) (\Phi(U)) \cdot (\mu_s \times \mu_q) (\Phi(V)) = \mu(U) \cdot \mu(V).
\end{align*}
In other words,
\begin{equation}\label{eq:mu-UV-is-muU-muV}
\mu(U \cap V) = \mu(U) \cdot \mu(V).
\end{equation}

For the general case, factorizing $m$ into the product of its prime factors $m = p_1^{k_1} p_2^{k_2} \cdots p_h^{k_h}$. Analogous discussion as above demonstrates that
\begin{enumerate}[\bf 1)]
\item The diagram
\begin{equation}
\xymatrix{
 \mathbb{Z}_m^{\mathbb{Z}} \ar[rr]^{T_{f}} \ar[d]_\Phi && \mathbb{Z}_m^{\mathbb{Z}} \ar[d]^{\Phi} \\
 \mathbb{Z}_{p_1^{k_1}}^{\mathbb{Z}} \times \cdots \times \mathbb{Z}_{p_h^{k_h}}^{\mathbb{Z}} \ar[rr]_{T_{p_1^{k_1}} \times \cdots \times T_{p_h^{k_h}}} && \mathbb{Z}_{p_1^{k_1}}^{\mathbb{Z}} \times \cdots \times \mathbb{Z}_{p_h^{k_h}}^{\mathbb{Z}}
}
\end{equation}
is commutative.
\item $\Phi := \Phi_{p_1^{k_1}} \times \cdots \times \Phi_{p_h^{k_h}}$ is an isomorphism, and $\mu \cong \mu_{p_1^{k_1}} \times \cdots \times \mu_{p_h^{k_h}}$.
\item For any two measurable sets $U, V \in \mathbb{Z}_m^{\mathbb{Z}}$ such that
$$
\mu_s(\Phi_s(U) \cap \Phi_s(V)) = \mu_s(\Phi_s(U)) \cdot \mu_s(\Phi_s(U)), \quad s = p_i^{k_i}, 1 \leq i \leq h,
$$
then
$$
\mu(U \cap V) = \mu(U) \cdot \mu(V).
$$
\end{enumerate}

In other words, we have demonstrated the following lemma.

\begin{lemma}\label{lem:T-mixing-iff-Ti-mixing}
Suppose $m = p_1^{k_1} p_2^{k_2} \cdots p_h^{k_h}$ for some prime number $p_1, \ldots, p_h$ and $k_1, \ldots, k_h \in \mathbb{N}$. A linear cellular automaton $T_f$ is strong mixing if and only if $T_{p_i^{k_i}}$ is strong mixing for $1 \leq i \leq h$.
\end{lemma}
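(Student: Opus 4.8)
The plan is to read the result off the Chinese Remainder decomposition already assembled above. The isomorphism $\Phi$ conjugates $T_f$ to the product $T_{p_1^{k_1}} \times \cdots \times T_{p_h^{k_h}}$ and carries $\mu$ to the product measure $\mu_{p_1^{k_1}} \times \cdots \times \mu_{p_h^{k_h}}$, so the lemma is exactly the statement that a finite product of measure-preserving systems is strong mixing if and only if each factor is strong mixing, applied to these coordinates.

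For the ``only if'' direction I would observe that, for each $i$, the projection $\Phi_{p_i^{k_i}} \colon \mathbb{Z}_m^{\mathbb{Z}} \to \mathbb{Z}_{p_i^{k_i}}^{\mathbb{Z}}$ is a measure-theoretic factor map: it is continuous and surjective, it intertwines $T_f$ and $T_{p_i^{k_i}}$ by the commutative diagram above, and it pushes $\mu$ forward to $\mu_{p_i^{k_i}}$. Hence, for measurable $A, B \subseteq \mathbb{Z}_{p_i^{k_i}}^{\mathbb{Z}}$, writing $\widetilde A = \Phi_{p_i^{k_i}}^{-1}(A)$ and $\widetilde B = \Phi_{p_i^{k_i}}^{-1}(B)$,
\[
\lim_{n\to\infty}\mu_{p_i^{k_i}}\bigl(T_{p_i^{k_i}}^{-n} A \cap B\bigr) = \lim_{n\to\infty}\mu\bigl(T_f^{-n}\widetilde A \cap \widetilde B\bigr) = \mu(\widetilde A)\,\mu(\widetilde B) = \mu_{p_i^{k_i}}(A)\,\mu_{p_i^{k_i}}(B),
\]
the middle equality being strong mixing of $T_f$. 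So each $T_{p_i^{k_i}}$ is strong mixing.

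For the ``if'' direction, assume every $T_{p_i^{k_i}}$ is strong mixing. I would first check the mixing limit for $U, V$ cylinders of $\mathbb{Z}_m^{\mathbb{Z}}$ and then extend: finite unions of cylinders form an algebra generating $\mathcal B$, the convergence on cylinders passes to finite unions by additivity of $\mu$, and a standard $\epsilon$-approximation — using $T_f$-invariance of $\mu$ to bound $\mu(T_f^{-n}U \,\triangle\, T_f^{-n}U_0) = \mu(U \,\triangle\, U_0)$ uniformly in $n$ — upgrades it to all measurable $U, V$. For a cylinder $U = [\alpha]_{i_1}^{i_2}$, the isomorphism $\Phi$ identifies $U$ with the ``rectangle'' $\prod_{i=1}^h U^{(i)}$, where $U^{(i)}$ is the cylinder in $\mathbb{Z}_{p_i^{k_i}}^{\mathbb{Z}}$ prescribing, at positions $i_1, \ldots, i_2$, the $i$-th coordinates of the entries of $\alpha$; similarly $V$ corresponds to $\prod_i V^{(i)}$. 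Since $(T_{p_1^{k_1}} \times \cdots \times T_{p_h^{k_h}})^{-n} = T_{p_1^{k_1}}^{-n} \times \cdots \times T_{p_h^{k_h}}^{-n}$ and $\mu \cong \prod_i \mu_{p_i^{k_i}}$, the commutative diagram gives, for every $n$,
\[
\mu\bigl(T_f^{-n}U \cap V\bigr) = \prod_{i=1}^h \mu_{p_i^{k_i}}\bigl(T_{p_i^{k_i}}^{-n} U^{(i)} \cap V^{(i)}\bigr),
\]
and letting $n \to \infty$ and using strong mixing of each $T_{p_i^{k_i}}$ coordinatewise yields $\mu(T_f^{-n}U \cap V) \to \prod_i \mu_{p_i^{k_i}}(U^{(i)})\,\mu_{p_i^{k_i}}(V^{(i)}) = \mu(U)\,\mu(V)$. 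This is precisely the third numbered item above made uniform in $n$, and it finishes the ``if'' direction.

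The only step that is not pure bookkeeping is the passage, in the ``if'' direction, from cylinders to arbitrary measurable sets: the numbered items above record the product formula for fixed pairs of sets, whereas strong mixing is a convergence statement, and one must invoke the standard fact that this convergence is inherited from a generating algebra. Everything else — that $\Phi$ is a topological and measure isomorphism, that cylinders correspond to products of cylinders, and that the shift acts coordinatewise through $\Phi$ — is immediate from the constructions already in place. I therefore expect this approximation remark to be the main (though mild) obstacle, the rest being a transcription of items 1)--3).
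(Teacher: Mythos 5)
Your proposal is correct, and it uses the same Chinese Remainder decomposition and product-measure facts (items 1)--3)) that the paper assembles before the lemma; the difference lies in how the ``if'' direction is closed. The paper does not argue from the abstract hypothesis ``each $T_{p_i^{k_i}}$ is strong mixing'': it reaches back into the proof of Lemma \ref{lem:mixing-m=pk} and uses the stronger, quantitative fact that for cylinders the equality $\mu_{p_i^{k_i}}(T_{p_i^{k_i}}^{-n}U^{(i)}\cap V^{(i)})=\mu_{p_i^{k_i}}(U^{(i)})\,\mu_{p_i^{k_i}}(V^{(i)})$ holds \emph{exactly} for all $n\ge K_i$, so that taking $K=\max_i K_i$ gives exact independence of $T_f^{-n}U$ and $V$ for $n\ge K$ -- information that is reused later in the Bernoulli argument of Section \ref{sec:Proof-Bernoulli}. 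You instead prove the general statement that a finite product of strong mixing systems is strong mixing: the product formula $\mu(T_f^{-n}U\cap V)=\prod_i\mu_{p_i^{k_i}}(T_{p_i^{k_i}}^{-n}U^{(i)}\cap V^{(i)})$ for cylinders, a passage to the limit factorwise, and then the standard extension from the generating algebra of finite unions of cylinders to all measurable sets via $T_f$-invariance. Your route is more faithful to the lemma as stated (it needs only the limit hypothesis, not the eventual exact independence specific to invertible LCA) and makes explicit the cylinder-to-measurable-set approximation that the paper, like most treatments, leaves implicit; the paper's route is shorter here and yields the uniform finite-time independence that the Bernoulli proof exploits. Your ``only if'' direction via the factor map $\Phi_{p_i^{k_i}}$ is the standard argument behind the paper's ``obviously'' and is fine.
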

\begin{proof}
Obviously, the strong mixing property of $T_f$ implies $T_{p_i^{k_i}}$ is strong mixing for $1 \leq i \leq h$. Suppose $T_{p_i^{k_i}}$ is strong mixing for $1 \leq i \leq h$. Given two cylinders $U$ and $V$, let $K_i$ be a positive integer, as indicated in the proof of Lemma \ref{lem:mixing-m=pk}, such that $\mu_{p_i^{k_i}}(T_{p_i^{k_i}}^{-n} U \cap V) = \mu_{p_i^{k_i}}(U) \mu_{p_i^{k_i}}(V)$ for $n \geq K_i$, where $1 \leq i \leq h$. Let $K = \max\{K_i\}$. A straightforward examination infers that
$$
\mu(T_f^{-n} U \cap V) = \mu(U) \mu(V) \quad \text{for} \quad n \geq K.
$$
This completes the proof.
\end{proof}

Notably Lemma \ref{lem:T-mixing-iff-Ti-mixing} remains true if one replaces strong mixing by either weak mixing or ergodic. The elucidation can be done via a minor modification of the discussion above and is omitted.

As a conclusion of this section, it is seen that Theorem \ref{thm:ILCA-Mixing} follows from Lemmas \ref{lem:mixing-m=pk} and \ref{lem:T-mixing-iff-Ti-mixing}.

\section{Proof of Theorem \ref{thm:ILCA-Bernoulli}} \label{sec:Proof-Bernoulli}

This section focuses on the proof of Theorem \ref{thm:ILCA-Bernoulli}. Similar to the discussion in Section \ref{sec:Proof-mixing}, where the key ideas of the present elucidation are addressed in, it is not difficult to demonstrate that $T_f$ is not a Bernoulli automorphism if there exists a prime factor $p$ of $m$ such that $j_p = 0$. Hence it remains to show that $j_p \neq 0$ for all prime factors $p$ of $m$ implies $T_f$ is a Bernoulli automorphism.

Alternatively, an automorphism $T_f$ is Bernoulli if and only if there is a generator $\xi$ which is Bernoulli for $T_f$ \cite{Pet-1990}. Let $\xi_{n_1}^{n_2}$ denote the partition consists of all cylinders of the form $[U]_{n_1}^{n_2}$ for $n_1, n_2 \in \mathbb{Z}$. It follows immediately from \eqref{eq:f^n-range-multiple-of-m}, \eqref{eq:f^-n-range-non-multiple-of-m}, and the proof of Lemma \ref{lem:mixing-m=pk} that $\xi_{n_1}^{n_2}$ is a generator provided $n_2 - n_1 \geq r - l$.

Notably, Lemma \ref{lem:T-mixing-iff-Ti-mixing} infers that we may assume $m = p^k$ for some prime number $p$ and $k \in \mathbb{N}$ without loss of generality. Moreover, we assume that $r \geq l \geq 0$ for the clarification of the elucidation.

Set $\ell$ as the smallest positive integer satisfying $2 \ell \geq r - l$. Then $\xi_{-\ell}^{\ell}$ is a generator. Write $\xi_{-\ell}^{\ell} = \{U_i\}_{i=1}^{m^{2 \ell + 1}}$. Claim \ref{claim:TU-TinvU-cylinder}, which demonstrates that $T_f^{-n} U_i$ is a cylinder for $1 \leq i \leq m^{2 \ell + 1}$ and $n \in \mathbb{Z}$, together with equations \eqref{eq:f^n-range-multiple-of-m}, \eqref{eq:f^n-range-non-multiple-of-m}, and \eqref{eq:f^-n-range-non-multiple-of-m} shows that
\begin{equation}\label{eq:Tn-TNn-xi-n-multiple-pk}
\bigvee_{i=-n}^0 T_f^i \xi_{-\ell}^{\ell} \subseteq \xi_{-\ell}^{\ell + n j_p}, \quad
\bigvee_{i=N}^{N+n} T_f^i \xi_{-\ell}^{\ell} \subseteq \xi_{-\ell - (N + n) j_p}^{\ell - N j_p},
\end{equation}
if $n = c p^{k-1}$ for some $c \in \mathbb{N}$, and
\begin{align}
\bigvee_{i=-n}^0 T_f^i \xi_{-\ell}^{\ell} &\subseteq \xi^{\ell + (c p^{k-1} + 1) j_p + d (k-1)(r - j_p)}_{-\ell}, \label{eq:Tn-xi-n-not-multiple-pk} \\
\bigvee_{i=N}^{N+n} T_f^i \xi_{-\ell}^{\ell} &\subseteq \xi^{\ell - N j_p}_{-\ell - (c p^{k-1} + 1) j_p - d (k-1)(r - j_p)}, \label{eq:TNn-xi-n-not-multiple-pk}
\end{align}
if $n = c p^{k-1} + d$ for some $c \in \mathbb{N}, 1 \leq d < p^{k-1}$, herein
$$
N = t p^{k-1} \quad \text{and} \quad t = \max \left\{1, \left\lceil \dfrac{2 \ell}{p^{k-1} j_p} \right\rceil\right\}.
$$

Notably, both $\displaystyle \bigvee_{i=-n}^0 T_f^i \xi_{-\ell}^{\ell}$ and $\displaystyle \bigvee_{i=N}^{N+n} T_f^i \xi_{-\ell}^{\ell}$ are collection of cylinders of the form $[U]_{n_1}^{n_2}$ and $[V]_{n_1'}^{n_2'}$, respectively, where the indices $n_1, n_2, n_1'$, and $n_2'$, depend on the value of $n$, are addressed in \eqref{eq:Tn-TNn-xi-n-multiple-pk}, \eqref{eq:Tn-xi-n-not-multiple-pk}, \eqref{eq:TNn-xi-n-not-multiple-pk}. Analogous discussion as addressed in the proof of Lemma \ref{lem:mixing-m=pk} indicates that $\displaystyle \bigvee_{i=-n}^0 T_f^i \xi_{-\ell}^{\ell}$ and $\displaystyle \bigvee_{i=N}^{N+n} T_f^i \xi_{-\ell}^{\ell}$ are independent. Hence $T_f$ is an Bernoulli automorphism, and this completes the proof of Theorem \ref{thm:ILCA-Bernoulli}.

\section{Conclusion and Discussion}\label{sec:discuss}

This paper investigates invertible linear cellular automata over $\mathbb{Z}_m^{\mathbb{Z}}$ with local rules of the form
$$
f(x_l, \ldots, x_r) = \Sigma_{i=l}^r \lambda_i x_i \pmod{m}, \quad l, r \in \mathbb{Z}, m \geq 2.
$$
Without using the natural extension, Theorems \ref{thm:ILCA-Mixing} and \ref{thm:ILCA-Bernoulli} reveal that an invertible linear cellular automaton is strong mixing and is a Bernoulli automorphism with respect to the uniform Bernoulli measure if and only if the canonical projection $f_p$ of $f$ is not permutative at the index $j = 0$ for every prime factor $p$ of $m$. This gives an affirmative answer for the open problem proposed by Pivato for reversible linear cellular automata \cite{Pivato-2009}. Furthermore, the elucidation extends the results in \cite{Kle-PAMS1997,She-MM1992} to all linear automorphisms. 


Notably, it can be verified without difficulty that an invertible linear cellular automaton is not ergodic if and only if $j_p = 0$ for some prime factor $p$ of $m$ (cf.~Corollary \ref{cor:ILCA-NotErgodic} and Example \ref{eg:j=0-not-ergodic}).

\begin{remark}\label{rmk:Result-for-more-measure}
Notably, one of the key points in demonstrating Theorem \ref{thm:ILCA-Mixing} is that the uniform Bernoulli measure $\mu$ is isomorphic to the product measure of those push-forward measures $\mu_{p_1^{k_1}} \times \cdots \times \mu_{p_h^{k_h}}$ under canonical projection maps. Hence Theorem \ref{thm:ILCA-Mixing} (resp.~Theorem \ref{thm:ILCA-Bernoulli}) remains true for every $T_f$-invariant measure $\mu$ which is isomorphic to the product measure $\mu_{p_1^{k_1}} \times \cdots \times \mu_{p_h^{k_h}}$ provided $T_{p_i^{k_i}}$ is strong mixing (resp.~Bernoulli) for $T_{p_i^{k_i}}$-invariant measure $\mu_{p_i^{k_i}}$ for $1 \leq i \leq h$.
\end{remark}

The methodology addressed in this paper can be applied to investigating multidimensional reversible linear cellular automata over $\mathbb{Z}_m$. Meanwhile, the elucidation of ergodic properties of nonlinear cases and cellular automata defined on Cayley graph are in preparation.

%

\bibliographystyle{amsplain}
\bibliography{../../grece}

\providecommand{\bysame}{\leavevmode\hbox to3em{\hrulefill}\thinspace}
\providecommand{\MR}{\relax\ifhmode\unskip\space\fi MR }
\providecommand{\MRhref}[2]{%
  \href{http://www.ams.org/mathscinet-getitem?mr=#1}{#2}
}
\providecommand{\href}[2]{#2}
\begin{thebibliography}{10}

\bibitem{ABC-JCA2013}
H.~Ak{\i}n, J.-C. Ban, and C.-H. Chang, \emph{On the qualitative behavior of
  linear cellular automata}, J. Cell. Autom. \textbf{8} (2013), 205--231,
  accepted.

\bibitem{BKM-PD1997}
F.~Blanchard, P.~Kurka, and A.~Maass, \emph{Topological and measure-theoretic
  properties of one-dimensional cellular automata}, Phys. D \textbf{103}
  (1997), 86--99.

\bibitem{BM-IJM1997}
F.~Blanchard and A.~Maass, \emph{Dynamical properties of expansive onesided
  cellular automata}, Israel J. Math. \textbf{99} (1997), 149--174.

\bibitem{BM-JMSJ2000}
M.~Boyle and A.~Maass, \emph{Expansive invertible onesided cellular automata},
  J. Math. Soc. Jpn. \textbf{52} (2000), 725--740, Erratum, J. Math. Soc. Jpn.
  56, 309–310 (2004).

\bibitem{CDL-TCS2004}
G.~Cattaneo, A.~Dennunzio, and L.Margara, \emph{Solution of some conjectures
  about topological properties of linear cellular automata}, Theoret. Comput.
  Sci. \textbf{325} (2004), 249--271.

\bibitem{CFMM-TCS2000}
G.~Cattaneo, E.~Formenti, G.~Manzini, and L.~Margara, \emph{Ergodicity,
  transitivity, and regularity for linear cellular automata over $z_m$},
  Theoret. Comput. Sci. \textbf{233} (2000), 147--164.

\bibitem{CP-MST1974}
E.~M. Coven and M.~Paul, \emph{Endomorphisms of irreducible shifts of finite
  type}, Math. Syst. Theory \textbf{8} (1974), 165--177.

\bibitem{FLM-TCS1997}
P.~Favati, G.~Lotti, and L.~Margara, \emph{Additive one-dimensional cellular
  automata are chaotic according to devaney's definition of chaos}, Theor.
  Comput. Sci. \textbf{174} (1997), 157--170.

\bibitem{Hed-MST1969}
G.~A. Hedlund, \emph{Endomorphisms and automorphisms of full shift dynamical
  system}, Math. Systems Theory \textbf{3} (1969), 320--375.

\bibitem{HMM-DaCDS2003}
B.~Host, A.~Maass, and S.~Mart\'{\i}nez, \emph{Uniform bernoulli measure in
  dynamics of permutative cellular automata with algebraic local rules},
  Discrete and Continuous Dynam. Systems \textbf{9} (2003), 1423--1446.

\bibitem{ION-JCSS1983}
M.~Ito, N.~Osato, and M.~Nasu, \emph{Linear cellular automata over
  $\mathbb{Z}_m$}, J. Comput. System Sci. \textbf{27} (1983), 125--140.

\bibitem{JNY-AAM2013}
C.~Jadur, M.~Nasu, and J.~Yazlle, \emph{Permutation cellular automata}, Acta
  Appl. Math. \textbf{126} (2013), 203--243.

\bibitem{JY-AAM2007}
C.~Jadur and J.~Yazlle, \emph{On the dynamics of cellular automata induced from
  a prefix code}, Adv. in Appl. Math. \textbf{38} (2007), 27–53.

\bibitem{Kle-PAMS1997}
R.~Kleveland, \emph{Mixing properties of one-dimensional cellular automata},
  Proc. Amer. Math. Soc. \textbf{125} (1997), 1755--1766.

\bibitem{MM-JCSS1998}
G.~Manzini and L.~Margara, \emph{Invertible linear cellular automata over
  $\mathbb{Z}_m$: {Algorithmic} and dynamical aspects}, J. Comput. System Sci.
  \textbf{56} (1998), 60--97.

\bibitem{MOW-CMP1984}
O.~Martin, A.~M. Odlyzko, and S.~Wolfram, \emph{Algebraic properties of
  cellular automata}, Commun. Math. Phys. \textbf{93} (1984), 219--258.

\bibitem{Nasu-TAMS2002}
M.~Nasu, \emph{The dynamics of expansive invertible onesided cellular
  automata}, Trans. Am. Math. Soc. \textbf{354} (2002), 4067--4084.

\bibitem{Pet-1990}
K.~Peterson, \emph{Ergodic theory}, Cambridge University Press, 1990.

\bibitem{Pivato-DaCDS2005}
M.~Pivato, \emph{Invariant measures for biperemutative cellular automata},
  Discrete and Continuous Dynam. Systems \textbf{12} (2005), 723--736.

\bibitem{Pivato-2009}
\bysame, \emph{Ergodic theory of cellular automata}, Encyclopedia of Complexity
  and Systems Science, Springer New York, 2009, pp.~2980--3015.

\bibitem{PY-ETDS2002}
M.~Pivato and R.~Yassawi, \emph{Limit measures for affine cellular automata},
  Ergodic Theory Dynam. Systems \textbf{22} (2002), 1269--1287.

\bibitem{PY-ETDS2004}
\bysame, \emph{Limit measures for affine cellular automata {II}}, Ergodic
  Theory Dynam. Systems \textbf{24} (2004), 1961--1980.

\bibitem{Sab-ETDS2007}
M.~Sablik, \emph{Measure rigidity for algebraic bipermutive cellular automata},
  Ergodic Theory Dynam. Systems \textbf{27} (2007), 1965--1990.

\bibitem{Sab-TCS2008}
\bysame, \emph{Directional dynamics for cellular automata: {A} sensitivity to
  initial conditions approach}, Theoret. Comput. Sci. \textbf{400} (2008),
  1--18.

\bibitem{She-MM1992}
M.~A. Shereshevsky, \emph{Ergodic properties of certain surjective cellular
  automata}, Monatsh. Math. \textbf{114} (1992), 305--316.

\bibitem{She-IMN1997}
M.A. Shereshevsky, \emph{K-property of permutative cellular automata}, Indag.
  Math. (N.S.) \textbf{8} (1997), 411--416.

\end{thebibliography}

\end{document}